\newbox\dottedarrow@box
\newcommand*\dottedarrow
\newcommand*\dottedarrow@t[1][1.5em]
\newcommand*\dottedarrow@m[1][]
\relax\detokenize{#1}\relax
\def\bdi{\begin{diagram}}
\def\edi{\end{diagram}}
\newtheorem{thm}{Theorem}[section]
\newtheorem{cor}[thm]{Corollary}
\newtheorem{lem}[thm]{Lemma}
\newtheorem{prop}[thm]{Proposition}
\theoremstyle{definition}
\newtheorem{defi}[thm]{Definition}
\newtheorem{defis}[thm]{Definitions}
\newtheorem{conj}[thm]{Conjecture}
\newtheorem{conv}[thm]{Convention}
\newtheorem{nota}[thm]{Notation}
\newtheorem{rem}[thm]{Remark}
\newtheorem{rems}[thm]{Remarks}
\newtheorem{exa}[thm]{Example}
\newtheorem{exas}[thm]{Examples}
\newtheorem{prob}[thm]{Problem}
\newtheorem{probs}[thm]{Problems}
\newtheorem{ques}[thm]{Question}
\newtheorem{sett}[thm]{Setting}
\newtheorem{sit}[thm]{}
\newcommand{\Spec}{\operatorname{{\rm Spec}}}
\def\codim{\mathop{\rm codim}}
\def\Pic{\mathop{\rm Pic}}
\renewcommand{\epsilon}{\varepsilon}
\def\and{\quad\mbox{and}\quad}
\newcommand{\G}{\ensuremath{\mathbb{G}}}
\newcommand{\GG}{\ensuremath{\mathbb{G}}}
\newcommand{\A}{\ensuremath{\mathbb{A}}}
\newcommand{\kk}[1]{\bk^{[#1]}}
\newcommand{\hX}{{\hat X}}
\newcommand{\hE}{{\hat E}}
\newcommand{\hO}{{\hat O}}
\newcommand{\hp}{{\hat p}}
\newcommand{\hs}{{\hat s}}
\newcommand{\tF}{{\tilde F}}
\newcommand{\tX}{{\tilde X}}
\newcommand{\tY}{{\tilde Y}}
\newcommand{\tE}{{\tilde E}}
\newcommand{\tp}{{\tilde p}}
\newcommand{\ts}{{\tilde s}}
\newcommand{\cL}{{\ensuremath{\mathcal{L}}}}
\newcommand{\cA}{{\ensuremath{\mathcal{A}}}}
\newcommand{\cO}{{\ensuremath{\mathcal{O}}}}
\renewcommand{\rho}{\varrho}
\def\bals#1\eals{\begin{align*}#1\end{align*}}
\def\bal#1\eal{\begin{align}#1\end{align}}
\def\kk{{\mathbb K}}
\def\A{{\mathbb A}}
\def\ZZ{{\mathbb Z}}
\def\QQ{{\mathbb Q}}
\def\CC{{\mathbb C}}
\def\PP{{\mathbb P}}
\renewcommand{\phi}{\varphi}
\newcommand{\bnum}{\begin{enumerate}}
\newcommand{\enum}{\end{enumerate}}
\newcommand{\brem}{\begin{rem}}
\newcommand{\brems}{\begin{rems}}
\newcommand{\erem}{\end{rem}}
\newcommand{\erems}{\end{rems}}
\newcommand{\bprob}{\begin{prob}}
\newcommand{\eprob}{\end{prob}}
\newcommand{\bprobs}{\begin{probs}}
\newcommand{\eprobs}{\end{probs}}
\newcommand{\bques}{\begin{ques}}
\newcommand{\eques}{\end{ques}}
\newcommand{\bexa}{\begin{exa}}
\newcommand{\bexas}{\begin{exas}}
\newcommand{\eexa}{\end{exa}}
\newcommand{\eexas}{\end{exas}}
\newcommand{\bdefi}{\begin{defi}}
\newcommand{\edefi}{\end{defi}}
\newcommand{\bdefis}{\begin{defis}}
\newcommand{\edefis}{\end{defis}}
\newcommand{\bcor}{\begin{cor}}
\newcommand{\ecor}{\end{cor}}
\newcommand{\blem}{\begin{lem}}
\newcommand{\elem}{\end{lem}}
\newcommand{\bconv}{\begin{conv}}
\newcommand{\econv}{\end{conv}}
\newcommand{\bconj}{\begin{conj}}
\newcommand{\econj}{\end{conj}}
\newcommand{\bprop}{\begin{prop}}
\newcommand{\eprop}{\end{prop}}
\newcommand{\bthm}{\begin{thm}}
\newcommand{\ethm}{\end{thm}}
\newcommand{\bnota}{\begin{nota}}
\newcommand{\enota}{\end{nota}}
\newcommand{\bsit}{\begin{sit}}
\newcommand{\esit}{\end{sit}}
\newcommand{\be}{\begin{equation}}
\newcommand{\ee}{\end{equation}}
\newcommand{\bproof}{\begin{proof}}
\newcommand{\eproof}{\end{proof}}
\newcommand{\bsett}{\begin{sett}}
\newcommand{\esett}{\end{sett}}
\def\ba{\begin{array}}
\def\ea{\end{array}}
\begin{document}
\title[Varieties covered by affine spaces]{
Varieties covered by affine spaces, 
uniformly rational varieties and their cones}

\author{I.~Arzhantsev, S.~Kaliman and M.~Zaidenberg}

\address{HSE University, 
Faculty of Computer Science,
Pokrovsky Boulevard 11, Moscow, 109028 Russia}
\email{arjantsev@hse.ru}
\address{Department of Mathematics,
University of Miami, Coral Gables, FL 33124, USA}
\email{kaliman@math.miami.edu}
\address{Univ. Grenoble Alpes, CNRS, IF, 38000 Grenoble, France}
\email{mikhail.zaidenberg@univ-grenoble-alpes.fr}

\dedicatory{In memory of Dmitri N. Akhiezer}

\thanks{2020 \emph{Mathematics Subject Classification} 
Primary  14J60, 14M25, 14M27; Secondary 32Q56} 
\keywords{ Gromov ellipticity, spray, 
uniformly rational variety, toric variety, 
spherical variety, affine cone}
\thanks{The first author was supported 
by the grant RSF-DST 22-41-02019}

\date{}
\maketitle
\begin{abstract} It was shown in 
[S.~Kaliman, M.~Zaidenberg,
\emph{Gromov ellipticity of cones over 
projective manifolds},
Math. Res. Lett. (to appear), arXiv:2303.02036 (2023)]
that the affine cones over flag manifolds and
rational smooth projective surfaces are elliptic 
in the sense of 
Gromov. 
The latter remains true after successive blowups 
of points on these varieties. 
In the present article we extend this to smooth projective 
spherical varieties (in particular, toric varieties) successively 
blown up along smooth subvarieties. 
The same holds, more generally, 
for uniformly rational  projective varieties,
in particular,
for projective varieties covered by affine spaces. It occurs also that 
stably uniformly rational complete varieties are elliptic. 
\end{abstract}
\section{Introduction}\label{sec:intro}
We work over algebraically closed field $\kk$ 
of characteristic zero. All the varieties 
in this paper are algebraic 
varieties defined over $\kk$;
$\PP^n$ and $\A^n$ stand for the projective 
resp. affine $n$-space 
over $\kk$;  $\GG_{\mathrm m}$ and $\GG_{\mathrm a}$ 
stand for the one-dimensional algebraic torus 
and the one-dimensional
unipotent algebraic group over $\kk$, respectively. 
All the notions, such as a neighborhood, a spray, etc. 
 are considered in the algebraic category unless otherwise noted. 
``Ellipticity'' below means ``Gromov's algebraic ellipticity''. 
\subsection{Gromov's ellipticity} 
The notion of Gromov ellipticity appeared first 
in analytic geometry 
where it serves in order to establish 
the Oka-Grauert Principle in the most general form, 
see \cite{Gro89} 
and \cite{For17}.  Besides, for an elliptic complex 
manifold $X$ the following approximation property 
holds: every holomorphic map $f\colon K \to X$ 
from a neighborhood of 
a compact convex set $K\subset\CC^n$ 
can be approximated by holomorphic maps 
$\CC^n\to X$, see \cite{Gro89}. 
A manifold $X$ with the latter property 
is called an \emph{Oka manifold}, see
the survey article \cite{For23}. 
If $X$ is algebraic and elliptic in the algebraic sense, then $f$ 
can be approximated by morphisms $\CC^n\to X$, 
see \cite[Corollary 6.5]{For23}. 

Gromov considered as well an analogous notion of ellipticity 
in the setup of algebraic varieties. It is known that 
an elliptic smooth
algebraic variety $X$ of dimension $n$
admits a surjective morphism from $\A^{n+1}$ 
which also is smooth  and surjective on
an open subset of $\A^{n+1}$, see \cite{Kus22a}. 
This implies that the endomorphism monoid
${\rm End}(X)$ is highly transitive on $X$, 
see \cite[Appendix~A]{KZ23b}.
Furthermore, for $\kk=\CC$
the fundamental group $\pi_1(X)$ is finite, 
see \cite{Kus22b}. 

Recall that a smooth algebraic variety $X$ is called 
\emph{elliptic} if it admits 
a dominating Gromov spray $(E,p,S)$ where
$p\colon E\to X$ is a vector bundle with zero section $Z$ 
and $s\colon E\to X$ is a morphism such that
$s|_Z=p|_Z$ and $s$ is dominating at any point $x\in X$, 
that is, the restriction $s|_{E_x}$ to the fiber $E_x=p^{-1}(x)$ 
is dominant at the origin $0_x\in E_x$. 
The image $O_x=s(E_x)\subset X$ 
is called the \emph{$s$-orbit of $x$}.

According to \cite[3.5.B]{Gro89} (see also 
\cite[Proposition 6.4.2]{For17}, \cite[Remark 3]{LT17} and 
\cite[Appendix B]{KZ23b})
if the ellipticity holds locally on an open covering of~$X$,
then it holds globally. Moreover, the ellipticity of $X$ holds 
if $X$ is subelliptic, that is, 
there is a dominating collection of sprays on $X$ 
instead of a single spray, see 
\cite[Definition 2.1]{For06}, 
\cite{For17} and \cite{KZ23a}. 
In other words, one can always replace 
a dominating collection of sprays on $X$ 
with a single  dominating spray. 
\subsection{Ellipticity of cones} 
Let $X\subset\PP^N$ be a smooth projective 
variety of dimension~$n$. 
The affine cone ${\rm cone}(X)$ in $\A^{N+1}$ 
blown up at the origin gives rise to
a line bundle $F=\cO_X(-1)$ on $X$ 
whose zero section $Z_F$ 
is the exceptional divisor of the blowup. 
The associated principal 
$\GG_{\mathrm m}$-fiber bundle $Y\to X$ 
with fiber $\A^1_*=\A^1\setminus\{0\}$
is isomorphic to $F\setminus Z_F$ and so,
to the punctured affine cone 
over $X$ that is, the affine cone with its vertex removed:
\[Y=F\setminus Z_F\simeq_X
 {\rm cone}(X)\setminus\{0\}.\]

Our aim is to establish the ellipticity of $Y$ 
provided $X$ is elliptic,  
under certain additional assumptions on $X$. 
In \cite{KZ23b} the second and the third authors 
suggested
criteria of ellipticity of $Y$ based on the 
so called \emph{curve-orbit property}
for some families of smooth rational curves 
and sprays on $X$. 
In particular,  it was shown in \cite{KZ23b} 
that the punctured affine cones over 
a flag variety $G/P$ blown up in several points 
and infinitesimally near points 
are elliptic and the same holds for any rational 
smooth projective surface, see \cite[Theorem 0.1]{KZ23b}. 
In the present note we develop 
further the technique of \cite{KZ23b}. This
allows us to establish  similar facts for uniformly rational varieties,
in particular, for varieties of 
class~$\cA_0$, therefore,
for smooth projective toric  and,
more generally,  spherical varieties.
Recall that a spherical variety is a normal $G$-variety 
which contains an open $B$-orbit, 
where $G$ is a reductive algebraic group and 
$B$ is a Borel subgroup in $G$. 
A flag variety $G/P$ and 
a normal toric variety are spherical varieties.  
For $G/P$ the latter follows 
from the Bruhat decomposition $G=BWB$, and
for a toric $T$-variety it suffices to choose $G=B=T$.
\subsection{Varieties of class $\cA_0$} 
One says that a variety $X$  
belongs to class $\cA_0$ 
if there is an open cover
$\{A_i\}$ on $X$
by affine cells $A_i\simeq\A^n$ where $n=\dim(X)$; {see 
\cite[Definition 2.3]{For06}.}\footnote{
A variety of class $\cA_0$ is also said to be 
\emph{$A$-covered}, see \cite[Definition 4]{APS14}. }
It is well known that 
a variety of class~$\cA_0$ is elliptic; 
see, e.g., \cite[Sec. 3.5]{Gro89}. 
The blowup of 
a variety of class $\cA_0$ in a point is again 
a variety of class~$\cA_0$, see \cite[3.5D]{Gro89}.  More generally, 
suppose $X$ is a variety of class $\cA_0$ and $Z\subset X$ 
is a closed subvariety
such that the pair $(A_i, Z\cap A_i)$ 
is isomorphic for any $i$  
to a pair $(\A^n, \A^k)$
with $n-k\ge 2$;  
in this case $Z$ is called a 
\emph{linear subvariety of $X$}.
The blowup of a linear subvariety $Z$ in $X$
results again in a variety of class $\cA_0$, 
see \cite[Section~4, Statement~9]{APS14}. 
\subsection{Uniformly rational varieties} 
This class of varieties strictly contains the class~$\cA_0$, 
see Examples \ref{exa:bb}.
\bdefi\label{uni.d1} An algebraic variety $X$ is called 
\emph{uniformly rational} \footnote{In other terms, regular, plain or
locally flattenable, 
see \cite[35.D]{Gro89}, \cite{BHSV08} and \cite{Pop20}, respectively.}
if for each $x\in X$ there is an open neighborhood $X_0$ of $x$
in $X$ isomorphic to an open subset of $\A^n$.
\edefi
In \cite[3.5.E$'''$]{Gro89}
Gromov asked whether a smooth 
complete rational variety is uniformly rational. 
It seems that this question is still open, see 
\cite[Question 1.1]{BB14} and \cite[p. 41]{CPPZ21}. 
On the other hand, not every complete uniformly rational 
variety belongs to class $\cA_0$.
For instance, none of the smooth rational cubic fourfolds in $\PP^5$ 
and none of the smooth 
threefold intersections of a pair of quadrics in $\PP^5$ contains 
a Zariski open set isomorphic to an affine space, see \cite{PS88}, 
and \cite{Pro94}. However, these varieties are uniformly rational, 
see \cite{BB14} and Examples \ref{exa:bb} below.

For the following property of uniformly rational varieties 
see \cite[Proposition~3.5E]{Gro89}, 
\cite[Theorem 4.4]{BHSV08} and \cite[Proposition 2.6]{BB14}.
\bthm\label{uni.t1} Let $X$ be a uniformly 
rational variety and $\tX\to X$ be
the blowing of $X$ up along a
smooth subvariety of codimension at least $2$. 
Then $\tX$ is uniformly rational.
\ethm
Notice that the total space of 
a locally trivial fiber bundle over a uniformly rational 
variety with a uniformly rational general fiber 
also is uniformly rational.
\subsection{Main results}
We prove the following theorem.
\begin{thm}[Theorem \ref{thm:main}] \label{mthm}
Let $X$ be a complete uniformly 
rational  variety of positive dimension. 
Then $X$ is elliptic. 
Let further $X$ be projective, $D$ be an ample  
divisor on $X$ and
$Y=F\setminus Z_{F}$ be the principal 
$\GG_{\mathrm m}$-fiber bundle 
associated with a line bundle~$F$ where
 either $F=\cO_X(-D)$ or $F=\cO_X(D)$.
Then $Y$ is elliptic. 
\end{thm}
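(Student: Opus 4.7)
The proof naturally splits along the two assertions of the theorem.

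\emph{First assertion (ellipticity of $X$).} By uniform rationality, $X$ has a Zariski cover $\{U_\alpha\}$ with each $U_\alpha$ isomorphic to an open subset $V_\alpha\subset\A^n$. The $n$ commuting coordinate translation $\GG_{\mathrm a}$-actions on $\A^n$ restrict to $V_\alpha$ and, transported to $U_\alpha$, yield local (partial) sprays whose combined differentials at every point of $U_\alpha$ span the tangent space. By Gromov's local-to-global principle (Theorem~3.5.B of \cite{Gro89}, as recalled in the introduction), this yields a subelliptic structure on $X$, and the equivalence subelliptic $\Rightarrow$ elliptic (also recalled in the introduction) then finishes the argument.

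\emph{Second assertion (ellipticity of $Y$).} First reduce the two cases to one: the principal $\GG_{\mathrm m}$-bundles of $\cO_X(D)$ and $\cO_X(-D)$ are canonically isomorphic via the fiber inversion $t\mapsto t^{-1}$, so we may assume $F=\cO_X(-D)$; then $Y$ is the punctured affine cone over $X$ with respect to $D$. The key observation is that $Y$ is itself uniformly rational: for $y\in Y$ over $x\in X$, pick a chart $U\ni x$ isomorphic to an open $V\subset\A^n$, small enough that $F$ trivializes on $U$; then $Y|_U\simeq V\times\GG_{\mathrm m}$ is an open subset of $\A^{n+1}$. Since $Y$ is not complete, the first assertion does not apply directly, so I would invoke the curve-orbit approach of \cite{KZ23b}: on each chart $Y|_U$, the translations of $V\subset\A^n$ together with the fiber-wise $\GG_{\mathrm m}$-scaling yield local sprays on $Y$; ampleness of $D$ provides, for $k\gg 0$, finitely many global sections of $F^{\otimes k}$ with empty common vanishing locus, producing $\GG_{\mathrm m}$-equivariant trivializations of the bundle associated with $F^{\otimes k}$ over complements of ample divisors whose union covers $X$. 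The curve-orbit argument of \cite{KZ23b} then lifts rational curves from $X$ to $Y$ and assembles the local sprays into a dominating global collection of sprays on $Y$, proving that $Y$ is subelliptic and hence elliptic.

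\emph{Main obstacle.} The first assertion is essentially a direct application of Gromov's local-to-global principle to local sprays inherited from $\A^n$-charts, and should be routine. The harder part is the second: since $Y$ is not complete, the local-to-global principle cannot be applied as in Part~1, and one must manually assemble local sprays into global ones, using projectivity of $X$, ampleness of $D$, and the curve-orbit technique of \cite{KZ23b}. Extending that technique from the flag-variety and rational-surface settings of \cite{KZ23b} to arbitrary uniformly rational projective $X$ is the technical core of the proof.
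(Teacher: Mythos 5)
Your overall architecture matches the paper's (charts plus localization for the first assertion; reduction of $\pm D$ to one case and a curve--orbit argument for the second), but in both parts the step you declare routine or merely gesture at is precisely where the paper's actual work lies, and as written each part has a genuine gap. For the first assertion: the coordinate translations of $\A^n$ do \emph{not} restrict to a proper open subset $V_\alpha\subsetneq\A^n$ --- the flow immediately leaves the chart --- so transporting them to $U_\alpha$ yields only a rational map $U_\alpha\times\A^1\dashrightarrow X$, not a morphism defined on the total space of a line bundle over $U_\alpha$ with values in $X$. Gromov's localization and Extension Lemma apply to honest local sprays, i.e.\ to morphisms $s_0\colon E_0\to X$ defined on all of $E_0$; a ``partial spray'' defined near the zero section is not usable in the algebraic category, where there is no tubular neighborhood to hide in. Completeness of $X$ extends each individual orbit map $\A^1\dashrightarrow X$, but the family version $X_0\times\A^1\dashrightarrow X$ may have indeterminacy in codimension two. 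The paper resolves exactly this in Lemma~\ref{lem:data}: one takes a general linear projection $\tau\colon\A^n\to\A^{n-1}$, compactifies the cylinder $\tau^{-1}(B)$ to $W=B\times\PP^1$, and uses Zariski's Main Theorem plus the genericity of the rulings to show that, after shrinking $B$, the rational extension $h\colon\PP^n\dashrightarrow X$ of the chart isomorphism is regular on the whole cylinder and at its point at infinity. Only then do the translations become genuine $\GG_{\mathrm a}$-like sprays on $X$, and the span-and-compose argument of Lemma~\ref{lem:ell} finishes.

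For the second assertion, the reduction to $F=\cO_X(-D)$ and the remark that $Y$ is uniformly rational are fine, but your mechanism for producing the vertical tangent direction on $Y$ fails. The fiberwise $\GG_{\mathrm m}$-scaling does not define a spray on $Y$: any algebraic parametrization of it by a line, e.g.\ $\tau\mapsto(1+\tau)\cdot y$, hits the zero section and so is not a morphism into $Y=F\setminus Z_F$; and indeed for trivial $F$ the variety $X\times\A^1_*$ is not elliptic, so ampleness must enter exactly at this point, which your sketch does not arrange. Likewise, the sections of $F^{\otimes k}$ and the equivariant trivializations you invoke play no role in closing the gap. The paper's Proposition~\ref{prop:ell-cone} obtains the missing direction as follows: the same cylinder data $(W=B\times\PP^1,\phi,X_0)$ of Lemma~\ref{lem:data} yields through each $x$ a complete rational curve $C_x$ (smooth at $x$, possibly singular off $X_0$) covered by the orbits of two $\GG_{\mathrm a}$-like sprays attached to two points at infinity $u_1\neq u_2$ on the $\PP^1$-factor (Corollary~\ref{cor:2-orbits}); lifting these to $Y$, their orbits become constant sections of the pulled-back $\GG_{\mathrm m}$-bundle over $\PP^1\setminus\{u_i\}$, and since $\deg(F|_{C_x})\neq 0$ by ampleness the transition function is $z^k$ with $k\neq 0$, so the two families of sections meet transversally at $y$. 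That transversal intersection supplies the vertical vector which, together with the lift of a dominating spray on $X$, spans $T_yY$. Without constructing the curves, the pair of sprays, and this transversality, the assembly you describe does not go through.
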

\begin{rems} 1. 
Up to isomorphism over $X$ which
inverses the $\GG_{\mathrm m}$-action, the 
$\GG_{\mathrm m}$-variety
$Y=F\setminus Z_{F}$ stays the same under replacing 
$D$ by $-D$. 

2. For a trivial line bundle~$F$ on $X$
the variety $Y\simeq_X X\times (\A^1 \setminus\{0\})$ 
is not elliptic and 
$\pi_1(Y)$ is infinite for $\kk=\CC$.

3. If $\Pic(X)=\ZZ$ then any non-principal divisor $D$ 
on $X$ is either ample or anti-ample. 
 
4. As a simple example, consider $X=\PP^1$ and let
$D$ be a point of $\PP^1$. Then
$F=\cO_{\PP^1}(-1)$ is the tautological line bundle
on $\PP^1$
and $Y=\A^2\setminus\{0\}$, 
which is elliptic. 
For  $D=0$ we obtain 
$Y=\PP^1\times (\A^1 \setminus\{0\})$, 
which is not elliptic.
\end{rems}
From Theorems \ref{uni.t1} and \ref{mthm} we 
deduce the following fact.
\begin{cor}\label{cor:main}
The variety $X'$ resulting from a sequence of blowups 
of a complete uniformly 
rational  variety $X$ along
smooth subvarieties  is elliptic. 
\end{cor}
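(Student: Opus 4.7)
The plan is a short induction on the length of the sequence of blowups, relying on the two ingredients already available: Theorem \ref{uni.t1}, which propagates uniform rationality along admissible blowups, and Theorem \ref{mthm}, which upgrades a complete uniformly rational variety of positive dimension to an elliptic one.

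First I would reduce to the case where every blowup is performed along a smooth subvariety of codimension at least $2$. Recall that a uniformly rational variety is smooth, since locally it is modeled on an open subset of $\A^n$. If some center $Z_i$ in the tower has codimension one inside the smooth ambient variety $X_i$, then $Z_i$ is a Cartier divisor and the corresponding blowup $X_{i+1}\to X_i$ is an isomorphism; such a step can be suppressed without changing $X'$. After this cleanup, every center has codimension at least $2$.

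Next, I would run induction on the number $r$ of remaining blowups in the sequence
\[
X' = X_r \longrightarrow X_{r-1} \longrightarrow \cdots \longrightarrow X_1 \longrightarrow X_0 = X.
\]
At each step $X_{i+1}\to X_i$ is the blowup of a smooth uniformly rational variety along a smooth subvariety of codimension $\ge 2$; by Theorem \ref{uni.t1}, $X_{i+1}$ is again uniformly rational (and, being a blowup of a smooth variety along a smooth center, again smooth). Since blowup morphisms are proper and the composition of proper morphisms is proper, $X_{i+1}\to X_0$ is proper, so $X_{i+1}$ is complete. Thus $X'$ is a complete uniformly rational variety of positive dimension (the $\dim X = 0$ case being vacuous), and Theorem \ref{mthm} applies to yield the ellipticity of $X'$.

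There is no genuine obstacle once Theorems \ref{uni.t1} and \ref{mthm} are granted; the only points worth checking are the preservation of smoothness and of the codimension-$\ge 2$ hypothesis at every stage, and the trivial observation that completeness is stable under proper morphisms. The whole argument is thus a direct bookkeeping consequence of the two theorems above.
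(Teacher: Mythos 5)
Your argument is correct and matches the paper's (implicit) proof, which simply deduces the corollary from Theorems \ref{uni.t1} and \ref{mthm} by iterating the blowup step. Your extra bookkeeping about codimension-one centers and the preservation of completeness is sound and only makes explicit what the paper leaves to the reader.
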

A closely related result in \cite[Corollary 2]{LT17}  says 
that the blowup $X'$ with a smooth center of codimension at least $2$ 
in a variety $X$ of class $\cA_0$ is subelliptic. 
Hence, $X'$ is elliptic by \cite[Theorem 0.1]{KZ23a}.
See also \cite{KKT18} for a similar result. 

Notice also that due to Theorem \ref{mthm} and 
to Gromov's theorem mentioned above, 
any uniformly 
rational compact complex 
algebraic variety $X$ is an Oka manifold.

Smooth complete 
 spherical varieties and smooth complete rational 
$T$-varieties of complexity one
belong to class $\cA_0$, see \cite{BLV86} and \cite{APS14}. 
Hence, these varieties are uniformly rational. So, 
we have the following corollary.
\begin{cor}[cf. Corollary \ref{cor:tor-sph}]
\label{cor:examples}
Let a smooth projective variety $X$ be  
either spherical or a rational $T$-variety of complexity one. 
Then the conclusions of Theorem~\ref{mthm} hold for $X$ 
successively blown up along smooth subvarieties. 
In particular, this holds if $X$ is a toric variety or a flag variety. 
\end{cor}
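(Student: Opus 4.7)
The plan is to reduce the corollary to a direct application of Theorems~\ref{uni.t1} and~\ref{mthm}, by observing that each of the three listed classes of initial varieties is already uniformly rational.

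First, I would invoke the facts recalled just before the corollary: smooth complete spherical varieties (in particular, smooth complete toric varieties) and smooth complete rational $T$-varieties of complexity one all belong to class $\cA_0$ by \cite{BLV86} and \cite{APS14}. Since any variety of class $\cA_0$ is uniformly rational in the sense of Definition~\ref{uni.d1}, every $X$ admitted by the hypotheses is uniformly rational.

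Next, I would proceed by induction on the length of the blowup sequence. Set $X_0 = X$, and let $\pi_{i+1}\colon X_{i+1} \to X_i$ denote the blowup along a smooth closed subvariety $Z_i \subset X_i$. Blowing up a smooth variety along a smooth divisor is an isomorphism, so we may restrict attention to the case $\codim_{X_i}(Z_i) \geq 2$, which is exactly the hypothesis of Theorem~\ref{uni.t1}. That theorem then transfers uniform rationality from $X_i$ to $X_{i+1}$; since blowups of smooth projective varieties along smooth centers remain smooth and projective, by induction the final variety $X'$ is smooth, projective, and uniformly rational. For the positive-dimensionality hypothesis needed later, note that $\dim X' = \dim X > 0$ in all nontrivial cases.

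Finally, I would apply Theorem~\ref{mthm} to $X'$. Since $X'$ is a positive-dimensional complete uniformly rational variety, the first conclusion gives that $X'$ is elliptic; since $X'$ is projective, any ample divisor $D'$ on $X'$ yields a principal $\GG_{\mathrm m}$-bundle $Y' = F' \setminus Z_{F'}$ with $F' = \cO_{X'}(\pm D')$ which is also elliptic by the second conclusion. This delivers the full statement of Theorem~\ref{mthm} for $X'$. As the technical content is entirely absorbed into Theorems~\ref{uni.t1} and~\ref{mthm}, there is no substantive obstacle in the corollary itself; the only point to verify is the initial claim that each of the three listed classes of varieties is of class~$\cA_0$, which is provided by \cite{BLV86} and \cite{APS14}.
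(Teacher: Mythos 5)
Your proposal is correct and follows essentially the same route as the paper: reduce each of the three classes to class $\cA_0$ via \cite{BLV86} and \cite{APS14} (the paper spells these out as Lemmas \ref{lem:toric}, \ref{lem:toric-1} and \ref{lem:spherical}), note that class $\cA_0$ implies uniform rationality, propagate uniform rationality through the blowup sequence by Theorem \ref{uni.t1}, and conclude with Theorem \ref{mthm}. Your explicit remark that blowups along divisors are isomorphisms, so the codimension-$\ge 2$ hypothesis of Theorem \ref{uni.t1} is harmless, is a small but welcome addition.
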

The next result concerns 
complete unirational varieties. 
\begin{thm}\label{cor:surj} 
Let $X$ be a complete unirational
variety of dimension $n$. 
Then there exists a smooth complete uniformly rational  
variety $\tX$ of dimension $n$
and surjective  morphisms $\A^{n+1}\to\tX\to X$. 
If $X$ is rational then the morphism $\tX\to X$ 
can be chosen to be birational. Furthermore, if the base
field $\kk$ is $\CC$, then there are surjective morphisms
$\A^{n}\to\tX\to X$. 
\end{thm}
\begin{proof} By Chow's Lemma 
there exists a projective 
variety $X'$ and a birational surjective morphism $X'\to X$, 
see \cite[Ch. II. Exercise 4.10]{Har04}. Clearly, 
replacing $X$ by $X'$ we may assume that $X$ is projective. 

Choose a generically finite dominant rational map 
$h\colon\PP^n\dasharrow X$, which is birational if $X$ is rational.
By Hironaka's theorem on elimination 
of indeterminacy
there exists a commutative diagram 
\usetikzlibrary{matrix,arrows,decorations.pathmorphing}
     \begin{center}
        \begin{tikzpicture}[scale=2]
        
        \node at (1,1){$\tX$};
        \node at (0,0){$\PP^n$};
        \node at (2,0){$X$};
        \node at (1,0.2){$h$};
        \draw[->][] (0.8,0.8)--node[above=1pt]{$f$} (0.2,0.2); 
         \draw[->][] (1.1,0.8)--node[above=1pt]{$g$} (1.8,0.2); 
        \draw[->][thick,dashed] (0.2,0)--(1.8,0);
       
        \end{tikzpicture}
\end{center}
where $f$ is a composition of blowups with smooth 
irreducible centers and $g$ is a generically finite morphism, 
which is birational if $h$ is,
see \cite{Hir64} and
\cite[Corollary 3.18 and Theorem 3.21]{Kol07}. 
By
Theorem \ref{uni.t1}, $\tX$ is uniformly rational,
hence elliptic, 
see Theorem \ref{mthm}. This allows to apply
a theorem of Kusakabe \cite{Kus22a}
which says that there is a surjective morphism $\A^{n+1}\to \tX$. 
Moreover, if $\kk=\CC$, then there is a surjective 
morphism $\A^{n}\to\tX$ by a result of Forstneri\v{c}, see 
\cite[Theorem 1.6]{For17a}. 
\end{proof}
From Theorem \ref{cor:surj} we deduce the following 
characterization of unirationality:
\begin{cor}
A complete variety 
$X$ over $\kk$ of dimension $n$ is unirational 
if and only if $X$ admits a surjective morphism from 
$\A^{n+1}$ (resp., from $\A^n$ if the base field is $\CC$).
\end{cor}
\begin{rems} 
1. 
The assumptions of completeness in Theorems \ref{mthm} and
\ref{cor:surj} are important, as the following simple example shows:  
take
the complement $T$ of the coordinate cross $xy=0$ in $\CC^2$. 
See, however, \cite{Arz23} and \cite{Bar23} for 
certain classes of affine and quasiaffine varieties, respectively,
that are images of affine spaces. 

2. 
Recall Gromov's question (see \cite[3.5$''$]{Gro89}): is every
unirational smooth complete variety elliptic?

3.
By the Lefschetz Principle, the theorem of
Forstneri\v{c} cited above holds over any algebraically closed 
field of characteristic zero which has
infinite transcendence degree 
over $\QQ$, see \cite{Ekl73}. 

4. 
The version of Kusakabe's theorem used 
in the above proof
says that a smooth complete 
elliptic variety $\tX$ admits 
a morphism $\A^{n+1}\to \tX$. Moreover, 
this morphism could be chosen so that its restriction 
to an open subset of $\A^{n+1}$ is smooth and surjective. 
For the reader's convenience we sketch a short 
argument close to the original one in \cite{Kus22a}.
Notice that the original Kusakabe's theorem works 
for a not necessarily complete smooth elliptic variety. 

Fix a dominating spray 
$(E,p,s)$ on $\tX$ of rank 
$r\ge n$. For any point $x\in \tX$ 
the restriction $s|_{E_x}\colon E_x\to \tX$
has surjective differential at the origin $0_x\in E_x$. 
Hence, one can choose a 
vector subspace $F_x\subset E_x$ of dimension $n$ 
such that 
$s|_{F_x}\colon F_x\to \tX$ is \'etale at $0_x\in F_x$. 
It follows that $F_x$ contains an open neighborhood 
$V_x$ of $0_x\in F_x$ 
such that the differential of $s|_{V_x}$  
has rank $n$ at any point $v\in V_x$
and $s(V_x)$ contains a Zariski open dense 
neighborhood $U_x$ 
of $x$ in $\tX$.  
Choosing a finite open covering $\{U_{x_i}\}$ 
of $\tX$, $i=1,\ldots,k$ one has $s(V)=\tX$ 
where $V=\cup_{i} V_{x_i}$. 

Since $\tX$ is unirational, 
hence rationally connected, 
there exists a rational curve $C$ in $\tX$ 
which passes through 
$x_1,\ldots,x_k$, see \cite[Ch. IV, Theorem 3.9]{Kol96}. 
The normalization morphism 
$\eta\colon \PP^1\to C$ induces 
a vector bundle $\eta^*E\to\PP^1$. 
Choose an affine chart 
$A\simeq\A^1$ in $\PP^1$ which contains 
$\eta^{-1}(x_i)$ for $i=1,\ldots,k$. The restriction of 
$\eta^*E|_A$ is trivial: $\eta^*E|_A\cong_A 
\A^1\times\A^r=\A^{r+1}$. 
Let $\tilde V$ resp. $\tilde F$ be the preimage of $V$ resp. 
of $F=\cup_{i} F_{x_i}$ in $\eta^*E|_A$. Identifying 
$\eta^*E|_A$ with the trivial vector bundle 
$\A^1\times\A^r\to\A^1$ one can find 
an automorphism $\phi$ of the latter
identical on $\A^1$
which 
sends every $\A^n$-component of $\tilde F$
to the fixed  $\A^n$-subspace of $\A^r$. 
Thus, $\phi$ sends $\tilde F$ into 
$\A^1\times\A^n=\A^{n+1}$. 
Since $s(V)=\tX$, letting $\tilde s=s\circ\eta_*\circ\phi^{-1}$
one has $\tilde s(\tilde V)=\tX$, 
where $\eta_*\colon \eta^*E|_A\to E|_A$ 
is the natural surjective morphism induced by $\eta$. 
Moreover, there exists an open neighborhood $\Omega$ of 
$\tilde V$ in $\A^{n+1}$ 
such that $\tilde s(\Omega)=\tX$ and 
$\tilde s|_\Omega\colon\Omega\to \tX$ is smooth.
\qed
\end{rems}
\subsection{Generalized affine cones}
By Theorem \ref{mthm} the punctured affine cones 
over uniformly rational  projective varieties
are elliptic. Let us mention further examples. 

Recall that an effective $\GG_{\mathrm m}$-action 
$\lambda\colon \GG_{\mathrm m}\times\bar Y\to\bar Y$ 
on a normal affine variety 
$\bar Y$ is called \emph{good}  
if there exists 
a point $y_0\in \bar Y$ 
which belongs to the closure of any \mbox{$\lambda$-orbit}.
The structure algebra $A=\cO_{\bar Y}(\bar Y)$ 
of such a $\GG_{\mathrm m}$-variety $\bar Y$
is positively graded:
$A=\bigoplus_{k\ge 0} A_k$  
where $A_0=\kk$ and $A_k$ for $k>0$ consists
of $\lambda$-homogeneous elements of weight $k$. 
Let $Y=\bar Y\setminus\{y_0\}$ and 
$X={\rm Proj}(A)=Y/\lambda$. 
Then $X$ is a normal projective variety, 
see \cite[Proposition 3.3]{Dem88}. 
According to \cite[Theorem 3.5]{Dem88}, 
see also \cite[Theorem 3.3.4]{Dol07},
there exists
an ample $\QQ$-Cartier divisor 
$D=\sum_i p_i/q_i D_i$ on $X$, where the $D_i$ 
are prime divisors and the integers $p_i,q_i$ are coprime,
such that 
\[A_k=H^0(X,\cO_X(\lfloor kD\rfloor))\quad\text{for 
every}\quad k\ge 0.\] 
Furthermore, the $\GG_{\mathrm m}$-action 
$\lambda$ on $Y$ is free if and only if $D$ 
is a Cartier divisor that is, $q_i=1$ $\forall i$, 
see \cite[Corollaire 2.8.1]{Dem88}. 

Conversely, given a smooth projective variety $X$ 
and an ample Cartier divisor
$D$ on $X$ 
one can consider the generalized affine cone
\[\bar Y=\Spec\left(\bigoplus_{n=0}^\infty 
H^0\left(X,\cO_X(nD)\right)\right).\]
This is a normal affine variety equipped with 
a good $\GG_{\mathrm m}$-action,
see \cite[Sec.~3]{Dem88} or \cite[Proposition 3.3.5]{Dol07}. 
Letting 
$Y=\bar Y\setminus\{y_0\}$ 
where $y_0\in \bar Y$ is
the unique \mbox{$\GG_{\mathrm m}$-fixed} point, one gets 
a morphism $\pi\colon Y\to X=Y/\GG_{\mathrm m}$. 
Every fiber of $\pi$ is reduced, irreducible 
and isomorphic to $\A^1_*$, see 
\cite[Proposition 2.8]{Dem88} and 
\cite[Proposition~3.4.5]{Dol07}. 
Furthermore, the $\GG_{\mathrm m}$-action on $Y$ is free 
and $\pi$ is locally trivial, 
see \cite[the proof of Proposition 2.8]{Dem88}.

Consider also the line bundle 
$F=\cO_X(-D)\to X$ equipped with 
the associated $\GG_{\mathrm m}$-action. 
We have a birational morphism 
$F\to \bar Y$ contracting the zero section 
$Z_F$ to a normal point $y_0\in \bar Y$, 
cf. \cite[3.4]{Dem88}. 
It restricts to an equivariant isomorphism
of smooth quasiaffine varieties
\[F\setminus Z_{F}\simeq \bar Y\setminus\{y_0\}\] 
equipped with free $\GG_{\mathrm m}$-actions, 
see \cite[Corollaire 2.9]{Dem88}, \cite[Sec. 3.4, p. 49]{Dol07}
and \cite[Sec. 1.15]{KPZ13}; cf. \cite[p. 183]{Pin77}.

Notice that while $\bar Y$ above is normal,  
for $X\subset \PP^N$ 
the affine cone ${\rm cone}(X)$ is normal if and only if 
the embedding $X\hookrightarrow \PP^N$
is projectively normal, that is,  for every $d\ge 1$
the linear system cut out on $X$ 
by the hypersurfaces of degree $d$ 
is complete,
see \cite[Chap. II, Example 7.8.4]{Har04}. 
Thus, if $D$ is a hyperplane sections of 
$X\subset \PP^N$ 
then $\bar Y$ as above is the normalization 
of the affine cone
${\rm cone}(X)$. In particular, the punctured cone
${\rm cone}(X)\setminus\{0\}$ is 
$\GG_{\mathrm m}$-equivariantly isomorphic to 
$\bar Y\setminus\{y_0\}$.
\section{The curve-orbit property}
\label{ss:alter} 
We need the following more general analog 
of the curve-orbit property $(*)$
for $\GG_{\mathrm a}$-sprays defined in 
\cite[Definition 2.7]{KZ23b}. 
\begin{defi}\label{def:like} 
Given a smooth variety 
$B$ of dimension $n-1$ 
consider the $\GG_{\mathrm a}$-action on the 
cylinder $V=B\times\A^1$ by 
shifts on the second factor:
\[s_V\colon \GG_{\mathrm a}\times V\to V,\quad
 (t, (b,v))\mapsto (b,v+t)\]
along with the associated \emph{$\GG_{\mathrm a}$-spray} 
$(E_V,p_V,s_V)$ on $V$ where
$E_V=V\times\A^1$ and $p_V\colon E_V\to V$ 
is the first projection. 

Let $X$  be a smooth variety of dimension $n$. 
Assume that $X$ admits a birational morphism
$\psi\colon V\to X$ biregular 
on an open dense subset $V_0\subset V$ with image 
$X_0\subset X$. Consider the spray $(E_0,p_0,s_0)$ on $X_0$ 
with values in $X$
conjugate to $(E_V,p_V,s_V)|_{V_0}$ via~$\psi$. That is, 
$E_0=X_0\times\A^1$, $p_0\colon E_0\to X_0$ 
is the first projection
and 
\[s_0\colon E_0\to X,\quad (x,t)\mapsto 
\psi(s_V(t,\psi^{-1}(x))).\]
Extend $(E_0,p_0,s_0)$ to a rank 1 spray $(E,p,s)$ on $X$;
 the latter spray exists due to Gromov's Extension Lemma, 
 see \cite[3.5B]{Gro89},  \cite[Propositions 6.4.1-6.4.2]{For17} 
 or \cite[Proposition 8.1]{KZ23b}. 
 We call $(E,p,s)$ a \emph{$\GG_{\mathrm a}$-like spray on $X$}.
This spray is associated with the birational 
$\GG_{\mathrm a}$-action on $X$
 conjugate via $\psi$ to the standard $\GG_{\mathrm a}$-action 
 on the cylinder~$V$, see \cite[Chap. 1]{Dem70}. 
\end{defi}
\begin{rem}\label{rem:diff} 
The closure $C_x=\overline{O_x}$ in $X$ 
of the $s_0$-orbit $O_x$ 
of a point $x\in X_0$ is a
rational curve. 
By construction, the intersection $C_x\cap X_0$ 
is smooth and $(E,p,s)$ restricts to 
a dominating spray on $O_x\cap X_0$, 
cf.\ Lemma \ref{lem:Gm-like} below. Moreover, 
the morphism $s\colon E_x\simeq\A^1\to C_x$ 
admits a lift to  the normalization $\PP^1$ of $C_x$, 
and the latter morphism $\A^1\to\PP^1$
is an embedding. 

However, the curve $C_x$ 
can have singularities off $X_0$. 
Thus, the setup of Definition \ref{def:like} 
does not guarantee  that $X$ verifies on $X_0$ 
either the curve-orbit property $(*)$
of \cite[Definition 2.7]{KZ23b}, or the enhanced 
curve-orbit property $(**)$ of \cite[Proposition 3.1]{KZ23b}.
Indeed, the latter properties postulate the smoothness 
of $C_x$ for $x\in X_0$, which 
occurs to be a rather restrictive condition 
for our purposes. 
The question arises whether any smooth 
complete variety of class $\cA_0$, 
or even every complete uniformly rational variety,  
verifies the  curve-orbit property $(*)$ of \cite{KZ23b} 
with smooth  rational curves; cf. 
Corollary \ref{cor:2-orbits}  and
Lemma \ref{lem:data} below.
Notice that this property holds for smooth
complete  rational surfaces and for flag varieties 
$G/P$, see \cite{KZ23b}.
\end{rem}
In order to use a criterion of ellipticity 
of cones over projective varieties from 
\cite[Corollary 2.9]{KZ23b} 
we introduce the following objects. 
\begin{defi}\label{def:like-2} 
Let $X$ and $B$ be as in Definition \ref{def:like}.
Consider a $\PP^1$-cylinder $W=B\times\PP^1$ with base $B$.
Assume that $X$ admits a birational morphism
$\phi\colon W\to X$ biregular 
on an open dense subset $W_0\subset W$ with image 
$X_0\subset X$. Given a point $u\in \PP^1$ consider the 
cylinder $V_u:=B\times 
(\PP^1\setminus\{u\})\simeq B\times\A^1$,
the birational morphism 
$\psi_u=\phi|_{V_u}\colon V_u\to X$
and the open dense subsets 
$V_u\cap W_0\subset V_u$ and 
$X_u=\psi_u(V_u\cap W_0)\subset X_0$.
Thus, the data $(W,\phi,X_0)$ yields 
a one-parameter family of 
$\GG_{\mathrm a}$-like sprays $(E_u,p_u,s_u)$ on $X$ 
where $u\in\PP^1$, see Definition \ref{def:like}. 
\end{defi}
\begin{lem}\label{lem:Gm-like} 
Under the setup of Definition \ref{def:like-2} 
 let for  $x\in X_0$,
\begin{equation}\label{eq:Cx} 
w=\phi^{-1}(x)=(b, u_x)\in W_0\quad\text{and}\quad 
C_x=\phi(\{b\}\times\PP^1)\subset X.
\end{equation} 
Then $C_x$ is a complete rational 
curve in $X$ through $x$ 
such that $C_x\cap X_0$ 
is smooth. 
If $x\in X_u$, that is $\phi(b,u)\neq x$, then the 
$s_u$-orbit $O_{u,x}$ of $x$ is one-dimensional 
and $(E_u,p_u,s_u)$ restricts to 
a spray on $C_x$ 
dominating at $x$ and such that 
$s_u|_{E_{u,x}}\colon E_{u,x}\to O_{u,x}$ 
is a birational morphism \'etale over $x$.
\end{lem}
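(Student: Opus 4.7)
The plan is to verify all the assertions by pulling the situation back to the cylinder $W = B \times \PP^1$, where the $\GG_{\mathrm a}$-action is transparent, and then transporting the computations to $X$ via $\phi$. All relevant conclusions concern behavior near $x$ or along $C_x \cap X_0$, i.e., in the locus where $\phi$ is an isomorphism onto $X_0$, so the global extension of $s_u$ from $X_u$ to $X$ (obtained via Gromov's Extension Lemma) plays no role in verifying them.

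First I would treat the statements about $C_x$. Since $\{b\} \times \PP^1 \simeq \PP^1$ is a complete rational curve, its image $C_x = \phi(\{b\} \times \PP^1)$ is a complete rational curve in $X$, and it contains $x = \phi(b, u_x)$ by the very choice of $u_x$. The intersection $W_0 \cap (\{b\} \times \PP^1)$ is a nonempty open subset of $\PP^1$ on which $\phi$ is biregular, with image $C_x \cap X_0$; this identifies $C_x \cap X_0$ with an open subset of $\PP^1$, hence smooth. Possible singularities of $C_x$ outside $X_0$ are tolerated, as flagged in Remark \ref{rem:diff}.

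Next, assuming $x \in X_u$, so that $u_x \neq u$ and $(b, u_x) \in V_u \cap W_0$: the shift action $s_{V_u}$ on $V_u = B \times (\PP^1 \setminus \{u\})$ preserves the first coordinate, and its orbit through $(b, u_x)$ is the entire fiber $\{b\} \times (\PP^1 \setminus \{u\}) \simeq \A^1$. By the formula of Definition \ref{def:like}, the $s_u$-orbit of $x$ is $O_{u,x} = \phi(\{b\} \times (\PP^1 \setminus \{u\})) \subset C_x$. The map $\phi|_{\{b\} \times \PP^1} \colon \PP^1 \to C_x$ is nonconstant (image is one-dimensional) and injective on the nonempty open set $W_0 \cap (\{b\} \times \PP^1)$, hence birational; in fact it is the normalization of $C_x$. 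Consequently $s_u|_{E_{u,x}} \colon E_{u,x} \simeq \A^1 \to O_{u,x}$ is a birational morphism and $O_{u,x}$ is one-dimensional. Because $\phi$ is biregular at $(b, u_x)$, this map restricts to an isomorphism of a neighborhood of $0_x$ onto a neighborhood of $x$ in the smooth curve $C_x \cap X_0$, so it is \'etale over $x$; the nonzero differential at $0_x$ then yields the dominating property there. Applying the same argument at a variable point $x' \in C_x \cap X_0 \cap X_u$ forces $\phi^{-1}(x') \in W_0$ to lie on the line $\{b\} \times \PP^1$, hence $O_{u,x'} \subset C_x$; thus the restriction of $s_u$ to $E_u|_{C_x}$ takes values in $C_x$ on a dense open subset, which is the spray structure on $C_x$ claimed in the lemma.

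The main subtlety I expect is the compatibility between the explicit cylinder formula $s_u(x,t) = \phi(b, u_x + t)$ and the action of the globally extended spray at $x$. Since Gromov's Extension Lemma builds $(E_u, p_u, s_u)$ from the data $(E_{u,0}, p_{u,0}, s_{u,0})$ over $X_u$ while preserving the original spray structure on $X_u \subset X_0$, this compatibility does hold at $x \in X_u$, and the cylinder picture captures all the local information needed; once this is checked, all three assertions (the dimension of $O_{u,x}$, the spray restriction to $C_x$, and the birational-\'etale statement) follow from the elementary computations above.
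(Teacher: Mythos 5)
Your proof is correct and takes essentially the same route as the paper: the paper's own proof consists only of the observation that the composition property of the $\GG_{\mathrm a}$-like spray forces $O_{u,x'}=O_{u,x}$ for $x'\in O_{u,x}\cap X_0$ (whence the restriction to $C_x$), with all remaining verifications explicitly left to the reader, and your pullback-to-the-cylinder computations supply exactly those details, including the compatibility with Gromov's Extension Lemma that the paper does not mention. The only point to keep in mind is that your identification $\phi\bigl(W_0\cap(\{b\}\times\PP^1)\bigr)=C_x\cap X_0$ (and hence the smoothness and \'etale claims) implicitly uses $W_0=\phi^{-1}(X_0)$, which is the same reading of Definition \ref{def:like-2} that the paper relies on.
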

\begin{proof} 
The $\GG_{\mathrm a}$-like spray $(E_u,p_u,s_u)$
inherits a kind of the composition property 
of a $\GG_{\mathrm a}$-action. Namely, 
for any $x'\in O_{u,x}\cap X_0$ the $s_u$-orbits 
$O_{u,x'}$ and $O_{u,x}$
coincide. This implies that $(E_u,p_u,s_u)$ restricts to 
a spray on $C_x$.
The rest of the proof is easy and is left to the reader.
\end{proof}
\begin{defi}\label{def:co-property}
Modifying \cite[Definition 2.7]{KZ23b} we say that a 
complete rational curve $C$ on a smooth variety $X$
verifies the  \emph{strengthened two-orbit property} 
at a smooth point $x\in C$  
if 
\begin{itemize}
\item[$(*)$] there exists a pair  of rank 1 
sprays $(E_i,p_i,s_i)$ $(i=1,2)$ 
on $X$ such that $C$ is covered by the 
one-dimensional $s_i$-orbits 
$O_{i,x}$, $s_i\colon E_{i,x}\to  O_{i,x}$ 
is a birational morphism \'etale over $x$
 and $(E_i,p_i,s_i)$ restricts to a spray 
on $O_{i,x}$ dominating at~$x$.
\end{itemize}
If for any $x\in X$ there exists a curve 
$C=C_x$ as above, 
then we say that $X$ verifies the 
\emph{strengthened curve-orbit property}.
\end{defi}
\begin{rem}\label{rem:without-restriction}
Following the lines of the proof of Proposition 
\ref{prop:ell-cone} one can establish 
the ellipticity of the punctured affine cone over 
an elliptic smooth projective variety with 
an ample polarization under the following
weaker assumption:
\begin{enumerate}
\item[$(*')$]
for each $x\in X$ 
there exists a rational curve $C_x$ in $X$ 
and a pair of sprays 
$\{(E_i,p_i,s_i)\}_{i=1,2}$ on $X$ such that 
\begin{itemize} 
\item $x$ is a smooth point of $C_x$;
\item $s_i|_{E_{i,x}}\colon E_{i,x}\to  O_{i,x}$ 
is a birational morphism \'etale over $x$;
\item $C_x=O_{1,x}\cup O_{2,x}$. 
\end{itemize}
\end{enumerate}
However, in the concrete setup of the present paper 
the strengthened curve-orbit property $(*)$ holds as well.
\end{rem}
We have the following corollary.
\begin{cor}\label{cor:2-orbits} 
Let $X$ be a smooth projective variety, and let 
$(W,\phi, X_0)$ be a data 
as in Definition \ref{def:like-2}. For
$x\in X_0$  let $C_x$
be a curve as in \eqref{eq:Cx}. Then
$C_x$ verifies the strengthened two-orbit property $(*)$ at $x$
with a pair of $\GG_{\mathrm a}$-like sprays. 
If for each $x\in X$ there exists a data  $(W,\phi, X_0)$
such that $x\in X_0$ then the strengthened curve-orbit property 
holds on~$X$ with pairs of $\GG_{\mathrm a}$-like sprays. 
\end{cor}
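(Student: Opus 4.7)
My strategy is to produce the pair of sprays required by property $(*)$ as two members of the one-parameter family of $\GG_{\mathrm a}$-like sprays $(E_u,p_u,s_u)_{u\in\PP^1}$ supplied by Definition \ref{def:like-2}. Writing $\phi^{-1}(x)=(b,u_x)\in W_0$, for every $u\in\PP^1\setminus\{u_x\}$ Lemma \ref{lem:Gm-like} yields a $\GG_{\mathrm a}$-like spray $(E_u,p_u,s_u)$ whose one-dimensional orbit $O_{u,x}$ is contained in $C_x$, for which $s_u\colon E_{u,x}\to O_{u,x}$ is a birational morphism \'etale over $x$, and whose restriction to $C_x$ (and hence to $O_{u,x}\subset C_x$) is a spray dominating at $x$. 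Since $x\in X_0$ and $C_x\cap X_0$ is smooth, $x$ is automatically a smooth point of $C_x$; so every ingredient of $(*)$ except the covering property $C_x=O_{1,x}\cup O_{2,x}$ is already in place.

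The main technical step, and the one I expect to require the most care, is to establish that $C_x\setminus O_{u,x}\subset\{\phi(b,u)\}$ for each $u\neq u_x$. I plan to deduce this from Remark \ref{rem:diff}: the morphism $s_u|_{E_{u,x}}\colon\A^1\to C_x$ lifts to an embedding of $\A^1$ into the normalization of $C_x$, and the latter coincides with $\phi|_{\{b\}\times\PP^1}\colon\PP^1\to C_x$ (this is indeed the normalization, since $\PP^1$ is smooth complete and $\phi$ restricts to an isomorphism from $(\{b\}\times\PP^1)\cap W_0$ onto the smooth curve $C_x\cap X_0$). Unpacking Definition \ref{def:like}, the orbit is parametrized via the standard translation $\GG_{\mathrm a}$-action on $V_u\simeq B\times\A^1 = B\times(\PP^1\setminus\{u\})$, whose point at infinity is $u$; hence the embedded $\A^1\hookrightarrow\PP^1$ is $\PP^1\setminus\{u\}$ and its image in $C_x$ is either all of $C_x$ (if the normalization identifies $u$ with another point of $\PP^1$) or $C_x\setminus\{\phi(b,u)\}$.

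With this in hand, I pick $u_1,u_2\in\PP^1\setminus\{u_x\}$ with $\phi(b,u_1)\neq\phi(b,u_2)$, which is possible since $C_x$ is a curve and so $\phi(b,\cdot)\colon\PP^1\to C_x$ is non-constant. Then
\[C_x\setminus\bigl(O_{u_1,x}\cup O_{u_2,x}\bigr)\subset\{\phi(b,u_1)\}\cap\{\phi(b,u_2)\}=\emptyset,\]
so the pair $(E_{u_i},p_{u_i},s_{u_i})$ for $i=1,2$ verifies $(*)$ at $x$ with two $\GG_{\mathrm a}$-like sprays, proving the first assertion. For the second, applying this construction at each $x\in X$ to a data $(W,\phi,X_0)$ with $x\in X_0$ furnishes a rational curve $C_x$ together with a pair of $\GG_{\mathrm a}$-like sprays satisfying $(*)$, which is precisely the strengthened curve-orbit property on $X$ from Definition \ref{def:co-property}.
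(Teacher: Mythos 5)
Your proposal is correct and follows essentially the same route as the paper: pick two distinct parameters $u_1,u_2\in\PP^1\setminus\{u_x\}$ and check via Lemma \ref{lem:Gm-like} that the corresponding $\GG_{\mathrm a}$-like sprays from Definition \ref{def:like-2} satisfy property $(*)$. You merely spell out the covering $C_x=O_{u_1,x}\cup O_{u_2,x}$ in more detail than the paper does (and your extra hypothesis $\phi(b,u_1)\neq\phi(b,u_2)$ is harmless but not needed, since if $\phi(b,u_1)=\phi(b,u_2)$ then that point already lies in $O_{u_1,x}=\phi(\{b\}\times(\PP^1\setminus\{u_1\}))$).
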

\begin{proof} 
Let $\phi^{-1}(x)=(b,u_x)\in W_0$. 
Pick two distinct points $u_1,u_2\in\PP^1$ 
different from 
$u_x$ and consider the corresponding 
$\GG_{\mathrm a}$-like sprays 
$(E_i,p_i,s_i)=(E_{u_i},p_{u_i},s_{u_i})$, $i=1,2$, 
see Definition \ref{def:like-2}. 
Due to Lemma \ref{lem:Gm-like} 
these sprays fit in  
Definition \ref{def:co-property}
of the strengthened two-orbit property.
This yields the first assertion. 
Now the second is immediate. 
\end{proof}
Using Definitions \ref{def:like} and \ref{def:like-2} 
we can generalize the ellipticity criterion 
for cones in
\cite[Corollary 2.9]{KZ23b} as follows. 
The proof repeats verbatim the proof 
of Corollary~2.9 in [KZ23b]
with minor changes.
\begin{prop}\label{prop:ell-cone}  
Let $X$ be a smooth projective variety and 
$\rho\colon F\to X$ 
be an ample line bundle. 
Suppose that $X$ is elliptic 
and 
for any point $x\in X$ there exists a data 
$(W,\phi, X_0)$ as in Definition \ref{def:like-2} 
such that $x\in X_0$.
Then $Y=F\setminus Z_F$ is elliptic. 
\end{prop}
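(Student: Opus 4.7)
The plan is to adapt the proof of \cite[Corollary~2.9]{KZ23b} essentially verbatim, with only minor modifications to accommodate $\GG_{\mathrm a}$-like sprays and rational curves that may be singular off~$X_0$. By Corollary~\ref{cor:2-orbits}, for each $x\in X$ one has a complete rational curve $C_x\ni x$, smooth at $x$, together with a pair of $\GG_{\mathrm a}$-like sprays $(E_i,p_i,s_i)$, $i=1,2$, whose one-dimensional orbits $O_{i,x}$ together cover $C_x$, and such that each $s_i|_{E_{i,x}}\colon\A^1\to O_{i,x}$ is birational and \'etale over $x$. By \cite[Theorem~0.1]{KZ23a} it is enough to produce a dominating collection of sprays on $Y$.

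The heart of the argument, exactly as in \cite[Corollary~2.9]{KZ23b}, is to lift the pair $(s_1,s_2)$ to a pair of rank~$1$ sprays $(\ts_1,\ts_2)$ on~$Y$ whose orbits through a chosen point $y\in\rho^{-1}(x)$ project birationally onto open subsets of $O_{i,x}\subset C_x$ while simultaneously moving $y$ along the $\GG_{\mathrm m}$-fiber. Concretely, in a local trivialization of~$F$ on an \'etale neighborhood of $x$ in $C_x$, one combines the tautological horizontal lift of the local $\GG_{\mathrm a}$-like structure on $X$ with a fiberwise $\GG_{\mathrm m}$-valued multiplier; the multiplier is built from a nonzero section of $F|_{C_x}$, which exists because the ampleness of $F$ forces $F|_{C_x}$ pulled back to the normalization $\PP^1\to C_x$ to have strictly positive degree. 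The two-orbit covering of $C_x$ by $O_{1,x}\cup O_{2,x}$ supplies the two local charts on $C_x$ needed to control the multiplier on either side of $x$. The combined differentials of $\ts_1$ and $\ts_2$ at $y$, together with the sprays already dominating the horizontal directions at $\rho(y)=x$ by the ellipticity of~$X$, then span~$T_yY$.

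The one genuine novelty with respect to \cite[Corollary~2.9]{KZ23b} is that $C_x$ may carry singularities on $C_x\setminus X_0$, so the sprays $\ts_i$ are a priori produced as rank~$1$ sprays defined only over an open neighborhood of $y$ in $Y$, not globally along $\rho^{-1}(C_x)$. This is inconsequential, which is the whole point of formulating the strengthened two-orbit property with smoothness of $C_x$ required only at $x$: the dominance check at~$y$ is purely infinitesimal and reads off in the formal neighborhood of $x\in C_x$, where $C_x$ is smooth and $s_i$ is \'etale, so the infinitesimal computation of loc.\ cit.\ applies unchanged. The main obstacle, therefore, is simply that of globalization, and it is settled by Gromov's Extension Lemma (\cite[3.5B]{Gro89}, \cite[Propositions~6.4.1--6.4.2]{For17}), which upgrades each locally defined rank~$1$ spray on $Y$ to a global one. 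Finally, quasicompactness of $X$ reduces the collection $\{\ts_1^{(x)},\ts_2^{(x)}\}_{x\in X}$, adjoined to a dominating family inherited from the ellipticity of $X$, to a finite dominating family on $Y$, establishing subellipticity and hence ellipticity of~$Y$.
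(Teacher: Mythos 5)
Your overall strategy coincides with the paper's: lift the dominating spray of $X$ and the pair of $\GG_{\mathrm a}$-like sprays from Corollary \ref{cor:2-orbits} to $Y$, and check at each $y\in Y$ that the lifted dominating spray contributes a hyperplane $H\subset T_yY$ with $d\rho(H)=T_xX$ while the two rank-$1$ lifts contribute a plane $P$ with $d\rho(P)=T_xC_x$, so that $P$ must contain a nonzero vertical vector and $\mathrm{span}(H,P)=T_yY$. However, the one step that actually carries the content of the proposition --- why the tangent vectors at $y$ to the two lifted orbits span a \emph{plane} rather than a line, i.e.\ why the lifted orbits $\hO_{1,y}$ and $\hO_{2,y}$ meet transversally at $y$ --- is not proved in your write-up. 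Your proposed mechanism, a ``fiberwise $\GG_{\mathrm m}$-valued multiplier built from a nonzero section of $F|_{C_x}$'', does not work as stated: a nonzero section of a line bundle of positive degree on $\PP^1$ vanishes somewhere, so it is not $\GG_{\mathrm m}$-valued, and you never specify how such a section would enter the definition of the lifted spray. In fact the lifts are obtained directly from \cite[Lemma 2.3]{KZ23b} (they are already global, so your appeal to Gromov's Extension Lemma on the quasi-affine $Y$ is unnecessary), and there is no freedom to insert a multiplier.

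The paper's actual argument for the transversality is the following, and it is the part you are missing. Pull everything back along the normalization $\phi_x\colon\PP^1\to C_x$: since $F$ is ample, $[F]\cdot[C_x]\neq 0$, so the $\GG_{\mathrm m}$-bundle $\tY=\phi_x^*(Y|_{C_x})\to\PP^1$ is nontrivial. The pulled-back $\GG_{\mathrm a}$-like sprays have orbits over $U_i=\PP^1\setminus\{u_i\}$, and in a trivialization $\tY|_{U_i}\cong U_i\times\A^1_*$ each one-dimensional orbit is the image of $\A^1$; since every morphism $\A^1\to\A^1_*$ is constant, these orbits are exactly the \emph{constant sections} over $U_i$. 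Nontriviality of $\tY$ means the transition function on $U_1\cap U_2$ is $z^k$ with $k\neq 0$, so a constant section over $U_1$, read in the $U_2$-chart, is $c z^k$ and meets every constant section over $U_2$ transversally. Pushing forward through the \'etale morphism $\tY\to Y|_{C_x}$ over $y$ gives the transversality of $\hO_{1,y}$ and $\hO_{2,y}$ at $y$. Without this computation (or a precise substitute), your proof does not establish that $P$ is two-dimensional, and the domination at $y$ does not follow.
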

\begin{proof} 
A dominating spray $(E,p,s)$ on $X$ 
lifts to a spray $(\hE,\hp,\hs)$ of rank $n=\dim(X)$ on $Y$
where $\hE$ fits in the commutative diagrams
\begin{equation}\label{eq:1}
\begin{array}{ccc} \hE &  \stackrel{{\hp}}{\longrightarrow} & Y\\
\, \, \, \, \downarrow^{\hat \rho}   & 
& \, \, \, \,\,\, \downarrow^{\rho|_Y}\\
E &  \stackrel{{p}}{\longrightarrow} & X 
\end{array} 
\quad\text{and}\quad 
\begin{array}{ccc} \hE &  \stackrel{{\hs}}{\longrightarrow} & Y\\
\, \, \, \, \downarrow^{\hat \rho}   & 
& \, \, \, \,\,\, \downarrow^{\rho|_Y}\\
E &  \stackrel{{s}}{\longrightarrow} & X 
\end{array} 
\end{equation}
see \cite[Lemma 2.3]{KZ23b}.

Given $y\in Y$ we let $x=\rho(y)\in X$ and let 
$C_x$ be a rational curve 
on $X$ passing through $x$ which is smooth at $x$ 
and such that $(C_x,x)$ satisfies the strengthened two-orbit property 
with a pair of $\GG_{\mathrm a}$-like sprays $(E_i,p_i,s_i)$ 
on $X$, $i=1,2$, see Definition \ref{def:co-property}
and Corollary \ref{cor:2-orbits}.  
Due to \cite[Lemma 2.3]{KZ23b} these sprays
admit a lift to a pair of rank~$1$ sprays 
$(\hE_i,\hp_i,\hs_i)$ on $Y$.

We will show, following the lines 
of the proof of Proposition 2.6  in
\cite{KZ23b}, that the triplet of sprays 
$(\hE,\hp,\hs)$ and $(\hE_i,\hp_i,\hs_i)$, 
$i=1,2$ is dominating at $y$. 
This implies the assertion. 

Notice that the tangent space at $y$ 
to the $\hs$-orbit is a hyperplane $H\subset T_yY$ such that
$d\rho(H)=T_xX$. 
We claim that the pair of tangent vectors at $y$ 
to the $\hs_i$-orbits $\hO_{i,y}$ on $Y$ span a plane 
$P$ in $T_yY$
such that $d\rho(P)=T_xC_x$.
Accepting this claim, 
there exists a nonzero vector
$v\in P$ such that $d\rho(v)=0$. Hence $v\notin H$ and so,
${\rm span}(H,P)= T_yY$, which gives the desired domination. 

To show the claim, consider a morphism of normalization
$\phi_x\colon \PP^1\to C_x$ and 
the pullback line bundle 
$\tF=\tF(x)=\phi_x^*(F|_{C_x})\to\PP^1$. Since $F\to X$ is ample  
one has $[F]\cdot [C_x]\neq 0$. 
Hence, $\tF\to\PP^1$
is nontrivial and so is the associated principal 
$\GG_{\mathrm m}$-fiber bundle 
$\tY=\tY(x)=\phi_x^*(Y|_{C_x})\to\PP^1$, 
see, e.g., \cite[Proposition 4.1]{Mit01}. For $i=1,2$ 
consider 
the pullback $\tp_i\colon \tE_{i}\to \tY$ 
of the line bundle $\hp_i\colon\hE_i\to Y$ 
via the induced morphism $\tY\to Y|_{C_x}$. 

According to the second diagram in \eqref{eq:1} we have 
\[\rho(\hO_{i,y})=\rho\circ\hs_i(\hE_{i,y})=
s_i(E_{i,x})=O_{i,x}\subset C_x.\]
 The birational morphism 
$s_{i}|_{E_{i,x}}\colon E_{i,x}=\A^1\to  C_x$ admits a lift 
 to normalization $\ts_{i,x}\colon \A^1\to\PP^1$. 
In fact, $\ts_{i,x}$ is a birational morphism smooth at $0$.
Hence, $\ts_{i,x}$ sends $\A^1$
isomorphically onto $U_i:=\PP^1\setminus\{u_i\}$,
the notation being as in the proof of Corollary \ref{cor:2-orbits}.

By Lemma \ref{lem:Gm-like}  $(E_i,p_i,s_i)$ restricts to a 
$\GG_{\mathrm a}$-like spray on $C_x$. It is easily seen that 
there is a pullback $\phi^*_x((E_i,p_i,s_i)|_{C_x})$ 
to a $\GG_{\mathrm a}$-like spray $(E'_i,p'_i,s'_i)$ on $\PP^1$
dominating at~$x$ and such that the $s'_i$-orbit of $x$
coincides with  $U_i$. Now, the pullback of $(E'_i,p'_i,s'_i)$ to
$\tY$ via $\tY\to\PP^1$ gives a  
$\GG_{\mathrm a}$-like spray $(\tE_i,\tp_i,\ts_i)$ on $\tY$ 
with $\tp_i\colon\tE_i\to\tY$ as above.

There are trivializations
$\tY|_{U_i}\cong_{U_i} U_i\times\A^1_* $, $ i=1,2$.
 The standard $\GG_{\mathrm a}$-action on 
 $U_i\simeq\A^1$ lifts to 
 a $\GG_{\mathrm a}$-action on 
 $\tY|_{U_i}$ whose orbits are the constant sections. 
 Since any morphism $\A^1\to\A^1_*$ is constant, 
the one-dimensional $\ts_i$-orbits in $\tY$ also 
are constant sections. 
Since the $\GG_{\mathrm m}$-fiber bundle 
$\tY\to\PP^1$ is nontrivial, it admits no global section.
Hence, in an appropriate affine coordinate $z$  on $U_1\cap U_2$ 
the transition function equals  
$z^k$ with~$k\neq 0$. It follows that 
the constant sections over $U_1$ 
meet transversally the ones over~$U_2$.  

The normalization morphism $\phi\colon \PP^1\to C_x$ 
is \'etale over $x$.
Hence, also the morphism $\tY\to Y|_{C_x}$ is \'etale over $y$.
Finally, the $\hs_i$-orbits $\hO_{1,y}$ and $\hO_{2,y}$
meet transversally at $y$. This proves our claim. 
\end{proof}
\section{Uniformly rational varieties}
\subsection{The ellipticity and the curve-orbit property 
of complete uniformly rational varieties}
In order to apply Proposition \ref{prop:ell-cone}
to complete (e.g.., projective) varieties
we need the following Lemmas \ref{lem:data}--\ref{lem:ell}.
\begin{lem}\label{lem:data} 
Let $X$ be a smooth complete variety of dimension $n\ge 2$. 
Then $X$ is uniformly rational if and only if for any point $x\in X$ 
there exists a data 
$(W,\phi, X_0)$ where $W=B\times\PP^1$ 
is a cylinder over an
open set $B\subset\A^{n-1}$ and $\phi\colon W\dasharrow X$ 
is a birational map which sends biregularly an open subset 
$W_0\subset B\times\A^1\subset W$ 
onto a neighborhood $X_0$ of $x$ in $X$; 
cf. Definitions \ref{def:like} and \ref{def:like-2}.
Furthermore, if $X$ 
is uniformly rational, then one can choose
a $\GG_{\mathrm a}$-like spray $(E_u,p_u,s_u)$ 
as in Definition \ref{def:like-2} so that
the differential ${\rm d} s_u$ sends
$T_{0_x}E_{u,x}$ to a general line in $T_xX$.
\end{lem}
\begin{proof} The ``if'' part is immediate.
 To show the ``only if'' part, suppose $X$ 
 is uniformly rational. 
 Then for any $x\in X$ 
there is an open subset
$V_0\subset\A^n$
and an isomorphism $h_0$ of $V_0$ onto a neighborhood of $x$ in $X$. 

Embed $\A^n\hookrightarrow \PP^n$ 
and let $h \colon \PP^n \dashrightarrow X$ 
be the birational extension of $h_0$.
Let $v=h_0^{-1}(x)\in V_0$. 
Fix a general point $P\in H=\PP^n\setminus \A^n$ and let $\cL$ 
be the family of projective lines in $\PP^n$ which pass
through $P$.
The projection $\pi_P\colon\PP^n\dasharrow \PP^{n-1}$ 
with center $P$ restricts to
a linear projection 
$\tau \colon \A^n\to \A^{n-1}$. The latter defines a decomposition 
$\A^n\cong\A^{n-1}\times\A^1$ 
and a family of parallel affine lines $l_y=\{y\}\times\A^1$
in $\A^n$ where $y\in\A^{n-1}$ and $\bar l_y\in\cL$. 
We may assume that $\bar{l}_0\in\cL$ passes through $v$. 

The embedding
$\A^n=\A^{n-1}\times\A^1\hookrightarrow \PP^n$ extends to a birational morphism 
$\psi \colon \A^{n-1}\times\PP^1\to \PP^n$ which contracts 
$\A^{n-1}\times \{\infty\}$ to $P$. The birational map 
$\tilde\varphi:=h \circ \psi \colon\A^{n-1}\times\PP^1\dasharrow X$ 
fits 
in the lower triangle of the commutative diagram

\usetikzlibrary{matrix,arrows,decorations.pathmorphing}
     \begin{center}
        \begin{tikzpicture}[scale=2]
        
        \node at (1,1){$\hX$};
        \node at (0,0){$\PP^n$};
        \node at (2,0){$X$};
        \node at (1,0.2){$h$};
        \draw[->][] (0.8,0.8)--node[above=1pt]{$f$} (0.2,0.2); 
         \draw[->][] (1.1,0.8)--node[above=1pt]{$g$} (1.8,0.2); 
        \draw[->][thick,dashed] (0.2,0)--(1.8,0);
        \node at (1,-1){$\A^{n-1}\times\PP^1$};
        \draw[->][] (0.8,-0.8)--node[above=1pt]{$\psi$} (0.2,-0.2); 
         \draw[->][thick,dashed] (1.1,-0.8)--node[above=1pt]{$\tilde\phi$} (1.8,-0.2); 
       
 \end{tikzpicture}
\end{center}
 In the upper triangle, $\hat X$ stands 
for the resolution of the closure $\Gamma_h$
of the graph of $h$ in $\PP^n\times X$, while $f$ and $g$ 
stand for the standard projections 
of $\Gamma_h$ to the factors composed 
with the resolution morphism $\hX\to\Gamma_h$. 
Thus, $\hX$ is a smooth projective variety 
and $f$ and $g$ are birational morphisms. 

We claim that one can choose 
an open neighborhood $B$ 
of $\tau(v)=0 \in \A^{n-1}$ and an open subset 
$W_0\subset W:=B\times\PP^1$ which contains $v$ 
so that the restriction 
$\varphi=\tilde\varphi|_{W_0}\colon W_0\to X$
 is biregular onto its image 
$X_0=\varphi(W_0)\ni x$, as desired.

To show the claim notice that the inverse 
$f^{-1}$ of the birational morphism 
$f\colon \hX\to\PP^n$ between 
smooth projective varieties is a blowup of $\PP^n$ 
whose center is an ideal sheaf supported 
on a closed subvariety $Z\subset \PP^n$ 
of codimension at least 2, 
see \cite[Ch. II, Theorem 7.17]{Har04}.  
On the other hand, $Z$ is the indeterminacy locus of 
$h=g\circ f^{-1}$ (\cite[Ch. II, Proposition 7.13(b)]{Har04}). 
Thus, $h$ is regular on 
$\PP^n\setminus Z$ and so, $\tilde\phi=h\circ \psi$
is regular on $(\A^{n-1}\times\PP^1)\setminus \psi^{-1}(Z)$.

Since $P\in H$ is a general point, 
we may assume that $P\notin Z$. 
Thus, $h$ is regular in $P$ and 
$\tilde\phi=h\circ\psi$ is regular on 
$\A^{n-1}\times\{\infty\}$ (recall that 
$\psi(\A^{n-1}\times\{\infty\})=P$). Since $P\notin Z$ 
the image $\pi_P(Z)$ 
is a closed subvariety of $\PP^{n-1}$. 
Since $\codim_{\PP^n} Z\ge 2$ 
the image
$\Delta:=\tau(Z\setminus H)=\pi_P(Z)\setminus\pi_P(H)$
is a proper closed subvariety of  
$\A^{n-1}=\PP^{n-1}\setminus\pi_P(H)$. 
Since $P$ is a general point of $H$ and 
$\codim_{\PP^n} Z\ge 2$ we may 
suppose that $\bar l_0\cap Z=\emptyset$ and so, 
$\tau(v)\notin\Delta$. Since by our construction $h$ 
is regular on $V_0$ we have
$Z\cap V_0=\emptyset$.

Let $B=\A^{n-1}\setminus\Delta$.  
It follows from the preceding that
$\tilde\phi$ is regular on 
$B\times\A^1$. Furthermore, 
$\tilde\phi$ is regular on
$W=B\times\PP^1$ and
sends biregularly the neighborhood
$W_0:=\psi^{-1}(V_0)\subset W$ of $\psi^{-1}(v)$
onto the neighborhood $X_0:=\tilde\phi(W_0)$ of $x$ in $X$.
Letting $\phi=\tilde\phi|_{W}$ the  claim follows. 
This proves the first assertion of the lemma.

To show the second assertion we let
$u=\infty\in\PP^1$ so that
$v\in V_u=B\times\A^1$, 
see Definition \ref{def:like-2}. 
Via the conjugation with~$\phi$, the 
$\GG_{\mathrm a}$-action on $V_u$ 
by shifts on the factor $\A^1$ yields a
$\GG_{\mathrm a}$-like  spray $(E_u,p_u,s_u)$ on $X$ 
as in Definition \ref{def:like}. 
The restriction of $s_u$ to 
$E_{u,x}$ is an immersion 
at the origin $0_x\in E_{u,x}$, see Lemma \ref{lem:Gm-like}.

Since $\tau \colon \A^n \to \A^{n-1}$ 
is a general linear projection, the tangent vector at $v$ 
to the orbit of $v$ under 
the $\G_a$-action on $V_u$ 
is a general vector in $T_{v}V_0$.
It follows that the differential  ${\rm d} s_u$ sends
$T_{0_x} E_{u,x}$
to a general line in $T_{x}X$. 
\end{proof}
\begin{lem}\label{lem:ell}
A complete uniformly rational 
variety $X$ is elliptic. 
\end{lem}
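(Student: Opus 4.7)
The plan is to establish subellipticity of $X$ and then upgrade to ellipticity via \cite[Theorem~0.1]{KZ23a}. Subellipticity will come from a finite family of $\GG_{\mathrm a}$-like sprays on $X$ (in the sense of Definition~\ref{def:like}) whose differentials span $T_xX$ at every point $x\in X$, produced by running Lemma~\ref{lem:data} $n$ times in $n$ independent directions at each point and then using quasi-compactness.

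For the pointwise input, fix $x\in X$ and a chart $h_0\colon V_0\xrightarrow{\sim}X_0\subset X$ with $V_0\subset\A^n$ open and $v=h_0^{-1}(x)\in V_0$. Choose $n$ generic linear projections $\tau_1,\ldots,\tau_n\colon\A^n\to\A^{n-1}$ whose kernel lines $\ell_1,\ldots,\ell_n$ form a basis of $\A^n$. For each $i$, running the construction in the proof of Lemma~\ref{lem:data} with $\tau_i$ in place of $\tau$ produces a data $(W_i,\phi_i,X_{0,i})$ as in Definition~\ref{def:like-2} with $x\in X_{0,i}$, and hence a $\PP^1$-family of $\GG_{\mathrm a}$-like sprays on $X$. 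I then pick a member $\sigma_i=(E_i,p_i,s_i)$ corresponding to any parameter $u_i$ different from the second coordinate of $\phi_i^{-1}(x)$; by Lemma~\ref{lem:Gm-like}, the $s_i$-orbit of $x$ is one-dimensional and its tangent at $x$ coincides with the tangent at $x$ of the rational curve $C_x^{(i)}=\phi_i(\{b_i\}\times\PP^1)$. The rulings of the affine cylinder $\tau_i^{-1}(B_i)\subset\A^n$ used in Lemma~\ref{lem:data} are translates of $\ell_i$, and $h_0$ sends the ruling through $v$ to an open piece of $C_x^{(i)}$, so this tangent direction equals $d(h_0)|_v(\ell_i)$.

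Since $h_0$ is an isomorphism at $v$ and the $\ell_i$ span $\A^n$, the vectors $d(h_0)|_v(\ell_i)$ span $T_xX$; thus $\{\sigma_1,\ldots,\sigma_n\}$ is dominating at $x$. Joint domination by a finite family of sprays is a Zariski-open condition on $X$, so the family is still dominating on an open neighbourhood $U_x$ of $x$. Completeness of $X$ yields quasi-compactness, so finitely many such neighbourhoods $U_{x_1},\ldots,U_{x_m}$ cover $X$; amalgamating the associated $nm$ sprays produces a finite dominating family on $X$, establishing subellipticity and thus, by \cite[Theorem~0.1]{KZ23a}, ellipticity.

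The main conceptual obstacle is why one cannot naively invoke Gromov's local-to-global principle on the cover of $X$ by charts $V_0\subset\A^n$: translations on $\A^n$ do not restrict to sprays on $V_0$, because orbits leave $V_0$, so the trivial ellipticity of $\A^n$ does not transfer through the chart. Lemma~\ref{lem:data} sidesteps this precisely by projecting along $\tau$ onto $\A^{n-1}$ and shrinking the base so that $\phi\colon B\times\PP^1\to X$ becomes a genuine birational \emph{morphism}; it is this upgrade from rational map to morphism that lets us extract a bona fide $\GG_{\mathrm a}$-like spray on $X$ in the sense of Definition~\ref{def:like} (the extension from $X_0$ to all of $X$ being free, by Gromov's Extension Lemma built into the definition). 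Once this input is granted, the remainder of the argument is bookkeeping on tangent directions plus a standard finite cover.
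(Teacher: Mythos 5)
Your proposal is correct and follows essentially the same route as the paper's own proof: both use the cylinder construction of Lemma~\ref{lem:data} with general linear projections to produce $n$ rank~$1$ $\GG_{\mathrm a}$-like sprays whose orbit directions at $x$ span $T_xX$, and then conclude subellipticity and hence ellipticity via \cite[Theorem 0.1]{KZ23a}. The only cosmetic difference is that the paper composes the $n$ sprays into a single rank~$n$ spray dominating at $x$ using \cite[Corollary 1.2]{KZ23a}, whereas you keep them as a family and make the openness-plus-compactness step explicit.
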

\begin{proof}
By Lemma \ref{lem:data}
one can find $n$ different rank 1 sprays 
$(E_i,p_i,s_i)$ on $X$, $i=1,\ldots,n$ where $n=\dim(X)$  such that 
the lines ${\rm d} s_i(T_{0_{i,x}} E_{i,x})$ 
span the tangent space $T_{x}X$. 
The composition of these sprays gives 
a rank $n$ spray on $X$ dominating at $x$, 
see \cite[Corollary 2.2]{KZ23a}.
This implies that $X$ is locally elliptic, hence elliptic, 
see \cite[Theorem 1.1]{KZ23a}. 
\end{proof}
\subsection{The main results} 
We can now deduce our main result.
\begin{thm}\label{thm:main} For a complete uniformly 
rational variety $X$ the following holds.
\begin{enumerate}
\item[{\rm (i)}] $X$ is elliptic.
\item[{\rm (ii)}] Assume that $X$ is projective 
and let $F\to X$ be an ample or 
anti-ample line bundle on $X$
with zero section $Z_F$.
Then $Y=F\setminus Z_F$ is elliptic. 
\end{enumerate}
\end{thm}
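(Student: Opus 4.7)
The plan is to assemble the three results already established in this section. Part (i) is exactly Lemma \ref{lem:ell}, so there is nothing further to say there; I would simply invoke it.

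For part (ii), I would first treat the case of an ample line bundle $F$. Since $X$ is smooth, complete and uniformly rational, Lemma \ref{lem:ell} gives ellipticity of $X$, while Lemma \ref{lem:data} provides, for every point $x\in X$, a data $(W,\phi, X_0)$ as in Definition \ref{def:like-2} with $x\in X_0$. These are precisely the hypotheses of Proposition \ref{prop:ell-cone}, so I would conclude directly that $Y=F\setminus Z_F$ is elliptic.

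To pass from the ample to the anti-ample case, I would use the observation recorded after the statement of Theorem \ref{mthm}: replacing an ample divisor $D$ by $-D$ yields a line bundle $F'=F^{-1}$ and a $\GG_{\mathrm m}$-equivariant isomorphism over $X$ between $F\setminus Z_F$ and $F'\setminus Z_{F'}$ which inverts the $\GG_{\mathrm m}$-action on fibers. If $F$ is anti-ample then $F^{-1}$ is ample, so the previous step applies to $F^{-1}$ and gives the ellipticity of the associated punctured bundle; via the above isomorphism this transfers to $Y=F\setminus Z_F$.

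There is no real obstacle here, since all technical input has been packaged into Lemmas \ref{lem:data}--\ref{lem:ell} and Proposition \ref{prop:ell-cone}. The only point requiring a moment of care is the anti-ample reduction, but this is purely formal given the $\GG_{\mathrm m}$-equivariant identification of $F\setminus Z_F$ with $F^{-1}\setminus Z_{F^{-1}}$ over $X$.
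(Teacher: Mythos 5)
Your proposal is correct and follows exactly the paper's own argument: part (i) is Lemma \ref{lem:ell}, and part (ii) is Proposition \ref{prop:ell-cone} combined with Lemma \ref{lem:data}. Your explicit treatment of the anti-ample case via the $\GG_{\mathrm m}$-equivariant identification of $F\setminus Z_F$ with $F^{-1}\setminus Z_{F^{-1}}$ is a welcome extra detail that the paper only records in the remark following the statement of the theorem in the introduction.
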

\begin{proof}
Statement (i) follows from Lemma \ref{lem:ell} and (ii) 
follows from Proposition \ref{prop:ell-cone} 
due to Lemma \ref{lem:data}.
\end{proof}
In Corollary \ref{cor:lar} below we slightly 
generalize statement (i). 
Let us introduce the following notion. 
\begin{defi}
We say that a variety $X$ is \emph{stably 
uniformly rational} 
if for some $k\ge 0$ the variety $X\times \A^k$ 
is uniformly rational. 
\end{defi}
\begin{rem} There exist 
non-rational stably rational varieties, see \cite{BCTSSD85}.
However, we do not know whether every stably 
uniformly rational variety is uniformly rational.
One may also ask whether there exists a non-rational
stably uniformly rational variety. 
\end{rem} 
On the other hand, we have the following 
lemma suggested by L\'arusson, 
see \cite[Proposition 1.9]{KKT18}. 
\begin{lem}\label{lem:Larusson}
If the product  $X=X_1\times X_2$ 
of smooth varieties $X_1$ and $X_2$
is elliptic, then the $X_i$ are elliptic. 
\end{lem}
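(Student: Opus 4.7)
The plan is to start with a dominating Gromov spray $(E,p,s)$ on $X = X_1\times X_2$ and, by symmetry, exhibit a dominating spray on $X_1$. The idea is to restrict the spray to a slice $X_1 \times \{x_2^0\}$ and post-compose the spray map with the projection $\pi_1 \colon X_1 \times X_2 \to X_1$.

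First I would fix any point $x_2^0 \in X_2$ and let $\iota\colon X_1 \hookrightarrow X$, $x_1 \mapsto (x_1,x_2^0)$, denote the inclusion of the slice. Define $p_1\colon E_1 := \iota^{*} E \to X_1$ to be the pullback vector bundle, with zero section $Z_1$, and let $j\colon E_1 \to E$ be the induced bundle map covering $\iota$. Define the candidate spray morphism by
\[
s_1 := \pi_1 \circ s \circ j \colon E_1 \to X_1.
\]

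Next I would verify the two defining properties of a dominating spray. For the zero-section condition: any $z \in Z_1$ over $x_1 \in X_1$ satisfies $s(j(z)) = p(j(z)) = (x_1, x_2^0)$, hence $s_1(z) = x_1 = p_1(z)$, so $s_1|_{Z_1} = p_1|_{Z_1}$. For the dominating property at a point $x_1 \in X_1$: the fiber $E_{1,x_1}$ is canonically identified with $E_{(x_1,x_2^0)}$, and by hypothesis the differential of $s|_{E_{(x_1,x_2^0)}}$ at the origin surjects onto
\[
T_{(x_1,x_2^0)} X \;=\; T_{x_1} X_1 \oplus T_{x_2^0} X_2.
\]
Post-composing with the linear surjection $d\pi_1$ yields that the differential of $s_1|_{E_{1,x_1}}$ at $0_{x_1}$ surjects onto $T_{x_1} X_1$, so $s_1|_{E_{1,x_1}}$ is dominant at $0_{x_1}$. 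Hence $(E_1,p_1,s_1)$ is a dominating spray on $X_1$, and the symmetric construction with a point fixed in $X_1$ gives a dominating spray on $X_2$.

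There is no serious obstacle in this argument; it amounts to transport of structure along the slice inclusion, combined with the elementary fact that a surjection of tangent spaces remains surjective after post-composition with a linear surjection. Thus the whole proof reduces to setting up the correct pullback diagram and chasing tangent vectors.
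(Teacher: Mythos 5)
Your proposal is correct and follows essentially the same route as the paper: restrict the spray bundle to the slice $X_1\times\{P\}$ and post-compose the spray map with the projection $\mathrm{pr}_1$. The paper only sketches this, while you additionally verify the zero-section and domination axioms, which is a welcome but routine elaboration.
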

For the reader's convenience we sketch the proof. 
\begin{proof}
let $(E,p,s)$ be a dominating 
spray on $X$. 
Pick a point $P\in X_2$ and consider 
the restriction 
$p_1\colon E_1\to X_1$ of $p\colon E\to X$ 
to $X_1\times\{P\}\subset X$.
Letting now 
\[s_1= {\rm pr}_1\circ s|_{E|_{X_1\times\{P\}}}
\colon E_1\to X_1\]
yields a desired dominating spray $(E_1,p_1,s_1)$ 
on $X_1$,
cf.\ the proof of Proposition 1.9 in \cite{KKT18}. 
\end{proof}
\begin{cor}\label{cor:lar} 
A complete stably uniformly rational variety $X$ is elliptic. 
\end{cor}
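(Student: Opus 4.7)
The plan is to reduce the problem to the complete case so that Theorem~\ref{thm:main}(i) applies, and then to extract ellipticity of $X$ itself via Lemma~\ref{lem:Larusson}. By hypothesis there exists $k\ge 0$ such that $X\times\A^k$ is uniformly rational. My first move would be to compactify the affine factor: embed $\A^k\hookrightarrow\PP^k$ as a standard chart and consider the smooth complete variety $X\times\PP^k$.

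The key step is to verify that $X\times\PP^k$ is uniformly rational. I would argue this from the standard open cover $\PP^k=U_0\cup\cdots\cup U_k$ with each $U_i\cong\A^k$: this induces an open cover $X\times\PP^k=\bigcup_{i=0}^k X\times U_i$ in which each piece is isomorphic to $X\times\A^k$, hence uniformly rational by hypothesis. Since uniform rationality is a local property (Definition~\ref{uni.d1}), it follows that $X\times\PP^k$ is uniformly rational.

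Having established this, I would apply Theorem~\ref{thm:main}(i) to the smooth complete uniformly rational variety $X\times\PP^k$ to conclude it is elliptic. Then Lemma~\ref{lem:Larusson}, applied to the factorization $X\times\PP^k$, yields that each factor is elliptic; in particular $X$ is elliptic, which is what we want.

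I do not expect any serious obstacle: the argument is essentially bookkeeping once one notices that compactifying the extra $\A^k$ by $\PP^k$ preserves uniform rationality. The only point that deserves a word of justification is the preservation of uniform rationality under this compactification, and it is handled immediately by the standard affine cover of $\PP^k$ as above.
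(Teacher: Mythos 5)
Your proposal is correct and follows exactly the route of the paper's own proof: pass to $X\times\PP^k$, note it is uniformly rational, apply Theorem~\ref{thm:main}(i), and then L\`arusson's Lemma~\ref{lem:Larusson}. The only difference is that you spell out (correctly, via the standard affine cover of $\PP^k$ and the local nature of Definition~\ref{uni.d1}) the step that $X\times\PP^k$ is uniformly rational, which the paper asserts without comment.
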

\begin{proof}
Let $X\times\A^k$ be uniformly rational. 
Then also $\hX=X\times\PP^k$ is. 
By Theorem \ref{thm:main}(i) $\hX$ is elliptic. 
By L\'arusson's Lemma \ref{lem:Larusson}
$X$ is elliptic too.
\end{proof}
The ampleness assumption in Theorem 
\ref{thm:main}(ii) is not a necessary one. Indeed, 
we have the following version of this theorem.
\begin{thm}\label{thm:fb}
Let $\pi \colon X\to B$ be a locally trivial fiber bundle 
with the base $B$ and the fiber $V$ being 
 uniformly rational smooth complete varieties.
Let $D$ be a relatively ample divisor 
on $X$, that is, $D\cdot X_b$ is an ample divisor 
on the fiber $X_b=\pi^{-1}(b)\simeq V$ for all $b\in B$. 
Let $F=\cO_X(\pm D)$.
Then $X$ and $Y=F\setminus Z_F$ are elliptic. 
\end{thm}
\begin{proof}
The ellipticity of $X$ follows from Lemma 
\ref{lem:ell} and the remark after Theorem \ref{uni.t1}. 
For every $b\in B$ the conclusion of 
Lemma \ref{lem:data} holds for 
the fiber $X_b$. It follows that $X_b$ verifies  
the strengthened curve-orbit property $(*)$, 
see Lemma \ref{cor:2-orbits}, that is, for any $x\in X_b$ 
there is a rational curve $C_x$ on $X_b$ smooth at $x$
 and a pair of  $\GG_{\mathrm a}$-like spays 
 $(E_i, p_i,s_i)$, $i=1,2$ on $X_b$ as in Remark 
 \ref{rem:without-restriction}. 
 Due to the local triviality of $\pi$ 
 and Gromov's Localization Lemma 
 these sprays can be extended on $X$. 
 Now the argument from the proof of 
 Proposition \ref{prop:ell-cone} 
 applies and yields the ellipticity of $Y$.
\end{proof} 
Observe that a relatively ample divisor $D$ 
as in Theorem \ref{thm:fb} 
does not need to be ample on $X$; cf.\ e.g., 
\cite[Example 2.11]{KZ23b}. 
\section{Examples} \label{sec:exs}
We start with the following well known example. 
For the reader's convenience we provide an argument.
\begin{lem}\label{lem:toric}
A smooth  complete toric variety $X$ belongs 
to class $\cA_0$.
\end{lem}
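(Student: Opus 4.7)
The plan is to invoke the standard dictionary between smooth complete toric varieties and smooth complete fans, and show that the maximal cones of the fan give the required cover by affine cells.

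First I would recall the combinatorial setup. Let $N\simeq\Z^n$ be the cocharacter lattice of the acting torus $T\simeq(\GG_{\mathrm m})^n$, let $M=\Hom(N,\Z)$ be the dual character lattice, and let $\Sigma$ be the fan in $N_{\R}=N\otimes\R$ defining $X$. For each cone $\sigma\in\Sigma$ one has the affine open invariant chart $U_\sigma=\Spec(\kk[\sigma^\vee\cap M])$, and $X$ is covered by these charts as $\sigma$ ranges over $\Sigma$.

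Next I would use smoothness and completeness of $X$. Completeness is equivalent to $|\Sigma|=N_{\R}$, which means in particular that the maximal cones of $\Sigma$ have dimension $n$ and that every point of $X$ is contained in some $U_\sigma$ with $\sigma$ of maximal dimension (since any $T$-orbit is contained in the closure of the open orbit of some top-dimensional cone). Smoothness of $X$ is equivalent to smoothness of the fan, i.e.\ every cone $\sigma\in\Sigma$ is generated by part of a $\Z$-basis of $N$. For a maximal cone $\sigma$, its primitive generators thus form a full basis $e_1,\ldots,e_n$ of $N$; the dual basis $e_1^*,\ldots,e_n^*$ generates $\sigma^\vee\cap M$ as a free commutative monoid, so $\kk[\sigma^\vee\cap M]\simeq\kk[x_1,\ldots,x_n]$ and
\[
U_\sigma\simeq\Spec\kk[x_1,\ldots,x_n]=\A^n.
\]

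Combining these two facts, the family $\{U_\sigma\}$ with $\sigma$ ranging over the top-dimensional cones of $\Sigma$ is an open cover of $X$ by charts each isomorphic to $\A^n$, which is exactly the defining condition of class $\cA_0$. There is no serious obstacle here: the only thing to check carefully is that the top-dimensional charts already cover $X$, which follows from the orbit-cone correspondence together with completeness, since every $T$-orbit closure in $X$ meets the open orbit of at least one top-dimensional cone.
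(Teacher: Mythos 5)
Your proof is correct and follows essentially the same route as the paper: smoothness of $X$ forces each top-dimensional cone of the complete fan to be generated by a $\Z$-basis of $N$, so the corresponding charts are copies of $\A^n$ and cover $X$ by completeness. The paper's version is just a terser statement of the same argument, citing Fulton for the details you spell out.
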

\begin{proof} Let $N$ be a lattice of rank $n=\dim(X)$ 
and $\Sigma$ be the complete fan in 
$N_\QQ=N\otimes_\ZZ\QQ$ associated with  $X$. 
Since $X$ is smooth every $n$-cone 
$\sigma\in\Sigma$ is simplicial generated by  vectors 
$v_1,\ldots,v_n\in N$ which form a base of $N$. 
The corresponding affine variety $X_\sigma$ 
is isomorphic to $\A^n$ 
and the affine charts  $X_\sigma$ cover $X$, 
see, e.g., \cite[Sec.~1.4]{Ful93}. 
\end{proof}
For the proof of the next lemma we address the original paper. 
\begin{lem}[{\rm \cite[Theorem 5]{APS14}}]\label{lem:toric-1}
A smooth  complete rational variety $X$ with a torus 
action of complexity $1$ belongs to class $\cA_0$.
\end{lem}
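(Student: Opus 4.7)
The plan is to invoke the Altmann--Hausen--S\"uss description of $T$-varieties by divisorial fans. Since $X$ has complexity one with rational quotient and $X$ is itself rational and complete, the rational quotient map is dominated by a morphism to a smooth projective rational curve, which must be $\mathbb{P}^1$. Hence $X$ is encoded by a complete divisorial fan $\mathcal{S}$ on $\mathbb{P}^1$, and one obtains a canonical cover of $X$ by $T$-invariant affine open charts $X_v$ indexed by the ``vertices'' of $\mathcal{S}$. These charts cover $X$ because $X$ is complete, so every $T$-orbit has a $T$-fixed point in its closure, and each fixed point lies in some $X_v$.

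The next step is to translate the smoothness of $X$ into a combinatorial condition on $\mathcal{S}$ and to deduce that each chart $X_v$ is isomorphic to $\mathbb{A}^n$. One should use the smoothness criterion for complexity-one $T$-varieties (the analogue of the toric smoothness criterion used in Lemma \ref{lem:toric}): smoothness at a $T$-fixed point of $X_v$ imposes that the associated polyhedral divisor data be of a very rigid ``linear'' shape, namely the one arising from a faithful linear $T$-action on $\mathbb{A}^n$. Once this identification $X_v \cong \mathbb{A}^n$ is established, the family $\{X_v\}$ is by construction an affine open cover of $X$ by cells isomorphic to affine space, so $X$ belongs to class $\mathcal{A}_0$.

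The main obstacle is the combinatorial step showing that a smooth affine complexity-one $T$-variety with a $T$-fixed point is necessarily isomorphic to $\mathbb{A}^n$ as an abstract variety. Unlike the toric case, where simpliciality and unimodularity of an $n$-cone immediately yield $\mathbb{A}^n$, in complexity one the polyhedral divisor involves both a ``tail cone'' and a collection of polyhedral coefficients on points of $\mathbb{P}^1$; one must verify that smoothness forces these data to come from a linear $T$-representation. This is exactly the content of \cite[Theorem 5]{APS14}, whose proof (which we do not reproduce) rests on a case analysis using the explicit structure of the divisorial fan together with Demazure-type resolutions, and is the reason we refer the reader to the original source.
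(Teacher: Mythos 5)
The paper gives no proof of this lemma at all, explicitly deferring to the original source with the remark ``For the proof of the next lemma we address the original paper,'' and your write-up does essentially the same thing: after sketching the divisorial-fan framework you defer the one genuinely hard step (smoothness forcing each invariant chart to be $\A^n$) to \cite[Theorem 5]{APS14}, which is precisely the citation the paper relies on. So your proposal matches the paper's approach; the surrounding sketch is a reasonable outline of what the cited proof involves, but the substance in both cases is the reference itself.
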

\begin{rem}
As a weak analog of the last result for affine varieties, let us mention
that every smooth rational affine variety 
with a torus action of complexity $0$ or $1$ is uniformly rational, 
see \cite{LP19}. 
See also \cite{Pet17} for examples of smooth contractible 
uniformly rational affine threefolds with a torus action of complexity 2
non-isomorphic to $\A^3$. 
These include, in particular, the famous Koras-Russell cubic.
\end{rem}
Lemma \ref{lem:toric} extends to 
complete spherical varieties. 
For the reader's convenience we sketch a proof.
\begin{lem}[{\rm\cite[Sec. 1.5, Corollaire]{BLV86}}]
\label{lem:spherical}
A smooth complete spherical variety $X$ 
belongs to class $\cA_0$.
\end{lem}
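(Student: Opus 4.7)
The plan is to reduce this to the Brion--Luna--Vust local structure theorem for spherical varieties combined with the smooth affine toric case already established in Lemma~\ref{lem:toric}. Fix a maximal torus $T\subset B\subset G$, let $B^-$ denote the opposite Borel, and recall that the unipotent radical $R_u(B^-)$ is isomorphic to an affine space as a variety. Since $X$ is complete and spherical, $T$ acts on $X$ with only finitely many orbits, so the fixed locus $X^T$ is a nonempty finite set (nonemptiness follows from Borel's fixed point theorem applied to $T$).

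My first step would be to construct, for each $x\in X^T$, a $B^-$-stable affine open neighborhood $X_x$ of $x$ in $X$ together with an isomorphism
\[
X_x \;\cong\; R_u(B^-) \times S_x,
\]
where $S_x$ is an affine $T$-stable slice through $x$. This is precisely the content of the local structure theorem in \cite{BLV86}. Smoothness of $X$ at $x$ transfers to smoothness of $S_x$ at $x$, while the spherical condition transfers into the statement that $T$ acts on $S_x$ with an open orbit. Hence $S_x$ is a smooth affine toric variety, and the argument of Lemma~\ref{lem:toric} applied in the smooth affine setting identifies $S_x$ with $\A^{\dim S_x}$. Multiplying the two factors yields $X_x\cong\A^n$ with $n=\dim X$.

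My second step would be to verify that the family $\{X_x\}_{x\in X^T}$ covers $X$. For this I would pick a generic one-parameter subgroup $\lambda\colon\GG_{\mathrm m}\to T$ and form the associated Bialynicki--Birula plus-decomposition $X=\bigsqcup_{x\in X^T} X^+(x)$. Because each $X_x$ is $T$-stable and open, for any $y\in X^+(x)$ one has $\lambda(t)\cdot y\in X_x$ for $t$ near $0$, and $\lambda$-invariance of $X_x$ forces $y\in X_x$ itself. Thus $X^+(x)\subset X_x$, and so the $X_x$ exhibit $X$ as the union of open cells each isomorphic to $\A^n$, placing $X$ in class $\cA_0$ by definition.

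The main obstacle is that the local structure theorem is itself a nontrivial input, as is the identification of the $T$-slice at a smooth fixed point with a smooth affine toric variety; I would not reprove these but rather quote them from \cite{BLV86}. The rest of the proof is essentially formal: the smooth affine toric case from Lemma~\ref{lem:toric}, the trivial fact that $R_u(B^-)$ is an affine space, and the Bialynicki--Birula observation used to extract the covering property from the finitely many local charts at $T$-fixed points.
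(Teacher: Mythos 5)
Your overall strategy (local structure theorem, an explicit affine chart at each distinguished point, then a covering argument) is in the spirit of the paper's proof, but several of the intermediate claims are incorrect, and the errors are not cosmetic. First, the assertion that $T$ acts on a complete spherical variety with finitely many orbits is false: sphericity gives finitely many $B$-orbits, not $T$-orbits; already for a full flag variety $G/B$ of dimension larger than the rank of $G$ a torus action cannot have finitely many orbits for dimension reasons. (Finiteness of $X^T$ may still hold, but it needs a different justification.) Second, the local structure theorem of \cite{BLV86} is stated along closed (projective) $G$-orbits, and the unipotent group appearing in the product decomposition is the unipotent radical $P^u$ of a parabolic $P$ opposite to the stabilizer of that orbit, not $R_u(B^-)$. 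Your formula $X_x\cong R_u(B^-)\times S_x$ cannot hold in general: for $X=\PP^2$ under $\SL_3$ one has $\dim R_u(B^-)=3>2=\dim X$. Moreover, a $T$-fixed point need not lie in a closed $G$-orbit at all (for $\PP^1\times\PP^1$ under the diagonal $\SL_2$-action, two of the four $T$-fixed points lie in the open orbit), so the theorem does not apply at every $x\in X^T$ in the form you invoke.

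Third, and most importantly, even at a point of a closed orbit the slice $V$ carries an action of the full reductive Levi $L$ of $P$, which is in general nonabelian; $V$ is a smooth affine $L$-spherical variety with an $L$-fixed point, and the way to identify it with an affine space is Luna's \'etale slice theorem, which shows $V$ is equivariantly an $L$-module. It is not a toric variety in general, so Lemma~\ref{lem:toric} cannot be used at this step; this appeal to Luna's theorem is exactly the ingredient the paper's proof supplies and yours is missing. Finally, the covering is obtained in the paper without any Bia{\l}ynicki--Birula decomposition and without finiteness of $X^T$: for any $y\in X$ the orbit closure $\overline{Gy}$ contains a point of a closed orbit, so the (open) chart along that closed orbit meets $Gy$, and a $G$-translate of it contains $y$. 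This simpler covering argument avoids all of the difficulties above.
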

\begin{proof} 
Recall first the Local Structure Theorem, 
see~\cite[Th\'eor\`eme 1.4]{BLV86}. 
Consider a normal $G$-variety $Z$, 
where $G$ is a connected reductive algebraic group.
Let $z\in Z$ be a point  
such that the orbit $Gz$ is a projective variety.
Then the stabilizer $G_z$ is a parabolic subgroup.
Let $P\subset G$ be the opposite parabolic subgroup  
with the Levi decomposition $P=LP^u$ where
$P^u$ is the unipotent radical of $P$ and $L=P\cap G_z$ 
is the Levi subgroup in $P$. 
The Local Structure Theorem asserts 
that there is a locally closed affine subset $V\subset Z$ 
such that $z\in V$, $LV=V$, $P^uV$ is open in $Z$ 
and the action of $P^u$ on $Z$
defines an isomorphism $P^uV\cong P^u\times V$. 

Returning to Lemma~\ref{lem:spherical} we let $Z=X$ 
and we apply the notation above. 
It suffices to show that any point $x\in X$ 
whose orbit $Gx$ is closed in $X$ 
admits a neighborhood in $X$
isomorphic to $\mathbb{A}^n$. 
Indeed, for any $y\in X$ the closure of the orbit $Gy$ 
contains such a point $x$. 

Since $X$ is spherical, in the setup of the Local Structure Theorem 
the  reductive Levi subgroup $L$ 
of ${P=G_x^-}$ acts 
on the corresponding smooth affine variety $V$ 
with an open orbit 
and with an $L$-fixed point $x$.
Applying Luna's \'Etale Slice theorem,
see, e.g., \cite[Corollary of Theorem~6.7]{PV94}, 
we deduce that $V$ is equivariantly isomorphic 
to an $L$-module. 
It follows that  $x$ has a neighborhood in $X$ isomorphic to 
$P^u\times V$, which is an affine space. 
\end{proof}
\begin{rem}
A similar argument proves the following fact, 
see \cite[Theorem 3]{Pop20}. Let
$G$ be a connected reductive algebraic group 
and $X$ be a smooth affine $G$-variety. 
Assume that $\cO_X(X)^G =\kk$ and
the unique closed $G$-orbit $O$ in $X$ is
rational. Then $X$ is uniformly rational. 
\end{rem}
Due to Lemmas \ref{lem:toric}, 
\ref{lem:toric-1} and \ref{lem:spherical} 
the following corollary of Theorem \ref{thm:main}  
is immediate. 
\begin{cor} \label{cor:tor-sph}
The punctured affine and generalized affine cones
over a smooth projective 
spherical variety
equipped with an ample polarization 
are elliptic. This remains true after successively 
blowing up such a variety 
in smooth subvarieties.
The same holds for a
smooth projective rational variety $X$ 
with a torus action of complexity~$1$. 
\end{cor}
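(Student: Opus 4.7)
The plan is to combine the three structural Lemmas \ref{lem:toric}, \ref{lem:toric-1}, and \ref{lem:spherical} with Theorem \ref{uni.t1} and Theorem \ref{thm:main}. By each of these lemmas, a smooth projective toric variety, a smooth projective spherical variety, and a smooth projective rational $T$-variety of complexity one all belong to class $\cA_0$ (since they are smooth and complete); the affine charts $A_i \simeq \A^n$ furnished by the $\cA_0$ structure then exhibit them as uniformly rational in the sense of Definition \ref{uni.d1}.

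Next, I would iterate Theorem \ref{uni.t1} to conclude that any variety $X'$ obtained from such an $X$ by successively blowing up smooth subvarieties is again uniformly rational; codimension-one centers yield isomorphisms and can be ignored, so only centers of codimension at least two require the theorem. Smoothness, completeness, and projectivity are preserved at each step, so $X'$ is a smooth projective uniformly rational variety. For any ample line bundle $F \to X'$ corresponding to the ample polarization, Theorem \ref{thm:main}(ii) then yields directly that $Y = F \setminus Z_F$ is elliptic.

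To identify $Y$ with the (generalized) punctured affine cone in the statement, I would invoke the material recalled in the introduction under ``Generalized affine cones'': with $D$ the ample Cartier divisor satisfying $F = \cO_{X'}(-D)$, the generalized affine cone $\bar{Y}$ over $(X', D)$ admits a $\GG_{\mathrm m}$-equivariant isomorphism $F \setminus Z_F \simeq \bar{Y} \setminus \{y_0\}$. When $|D|$ yields a projectively normal embedding $X' \hookrightarrow \PP^N$, $\bar{Y}$ is the classical affine cone itself; in general $\bar{Y}$ is its normalization, and the induced map $\bar{Y} \setminus \{y_0\} \to \mathrm{cone}(X') \setminus \{0\}$ is a finite birational morphism onto a target that is smooth (as the $\GG_{\mathrm m}$-bundle associated to $\cO_{X'}(-1)$ over a smooth variety), hence normal; it is therefore an isomorphism. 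This transports ellipticity from $F \setminus Z_F$ to the classical punctured cone. No step presents a substantive obstacle: the proof is essentially a chain of citations to results already established in the excerpt, with the only mild verification being the identification of the various cone models with $F \setminus Z_F$.
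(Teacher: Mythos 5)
Your proposal is correct and follows essentially the same route as the paper: the paper derives this corollary "immediately" from Lemmas \ref{lem:toric}, \ref{lem:toric-1} and \ref{lem:spherical} (membership in class $\cA_0$, hence uniform rationality), Theorem \ref{uni.t1} for the blowups, and Theorem \ref{thm:main}(ii), with the identification of $F\setminus Z_F$ with the punctured (generalized) affine cone already set up in the introduction. Your extra verification that the normalization map onto the classical punctured cone is an isomorphism is a harmless elaboration of background the paper takes as given.
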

\begin{rem}\label{rem:flexibility}
Recall that the 
normalization of the affine cone 
${\rm cone}(X)$ over a smooth 
toric variety $X$ in $\PP^n$ is a normal 
affine toric variety with no torus factor.
It is known that such a variety is flexible, 
see \cite[Theorem 0.2(2)]{AKZ12}.
It follows that ${Y={\rm cone}(X)\setminus\{0\}}$ 
is elliptic. 
However, it is not clear whether the flexibility 
of ${\rm cone}(X)$ survives under 
blowing-up a point in $X$.
\end{rem}
Let us mention some known examples 
of uniformly rational smooth Fano varieties. 
\begin{exas} \label{exa:bb} 
1.
 It is known that a smooth rational cubic 
 hypersurface in $\PP^{n+1}$, $n\ge 2$
and a smooth intersection of 
two smooth quadric hypersurfaces in 
$\PP^{n+2}$, $n\ge 3$
are uniformly rational, cf.\ \cite[3.5.E$'''$]{Gro89} 
and \cite[Examples 2.4 and 2.5]{BB14}.
According to Theorem \ref{thm:main}, 
such a variety $X$
is  elliptic and the punctured (generalized) 
affine cone $Y$ over $X$ equipped with 
an ample polarization is elliptic. 

Notice that any smooth cubic hypersurface of dimension 
$n\ge 2$ in $\PP^{n+1}$  is unirational, see \cite[Theorem 1.38]{KSC04}. 
All smooth cubic surfaces in $\PP^3$ are rational. By contrast,  
no smooth cubic threefold in $\PP^4$ is rational, see \cite{CG72}. 
It is unknown (but is plausible) 
whether for $n\ge 4$ the general cubic hypersurface  in 
$\PP^{n+1}$ is irrational. 
However, for any $k\ge 1$ there are smooth cubic hypersurfaces 
in $\PP^{2k+1}$ which contain two disjoint linear $k$-subspaces. 
Any such cubic hypersurface is rational (hence also uniformly rational),
see \cite[1.33--1.35]{KSC04}. 

A smooth intersection $X$ of 
two smooth quadric hypersurfaces in 
$\PP^{n+2}$, $n\ge 3$
 is always rational, 
see, e.g., \cite[p. 796]{GH94}.
For $n=3$ no smooth quartic threefold
$X$ as above belongs to class $\cA_0$. 
The latter
follows from the classification of smooth 
Fano threefolds with Picard number 1 
which contain an open subset isomorphic to 
$\A^3$,
see \cite[Theorem 4.31]{CPPZ21}. 

2. It is shown in 
\cite[Proposition 3.2]{BB14} that 
the moduli space $\overline{\mathcal{M}}_{0,n}$ 
of stable $n$-pointed rational curves
is a complete uniformly rational  variety. 
By Theorem \ref{thm:main} it is elliptic, 
hence is the image of an affine space under 
a surjective morphism. 
 
3. The same holds for 
a small algebraic resolution of
a nodal cubic threefold in $\PP^4$, see 
\cite[Example 2.4 and Theorem 3.5]{BB14}. 
See also \cite[Section 3]{BB14} 
for further examples.

4. Any smooth Fano-Mukai fourfold with Picard number $1$,
genus $10$ and index $2$ is rational.
The moduli space $\mathcal{M}$ of such fourfolds 
is one-dimensional. 
With one exception, every such fourfold $X_t$ belongs 
to class $\cA_0$, see \cite[Theorem 2]{PZ23}. 
The exceptional 
Fano-Mukai fourfold $X_0$ is covered 
by open $\A^2$-cylinders
$Z_i\times\A^2$ where the $Z_i$ are smooth 
rational affine surfaces, 
see \cite[Proposition 7.3]{PZ23}. 
Since the $Z_i$ 
are uniformly rational, 
so is $X_0$.
By Theorem \ref{thm:main} the punctured 
affine cones 
and generalized affine cones over 
$X_t\in\mathcal{M}$
are elliptic. 
In fact, all these cones over $X_t$, 
without any exception, 
are even flexible, 
see \cite[Theorem 1]{PZ23}. 
\end{exas}
\medskip
\noindent {\bf Acknowledgments.} 
We are grateful to Yuri Prokhorov for valuable discussions, 
to draw our attention to
 uniformly rational varieties, to
the articles \cite{BB14, BHSV08} and to
Example \ref{exa:bb}.1. 
Our thanks are also due to an anonymous referee 
for useful remarks. 
%

\end{document}